\newtheorem{theorem}{Theorem}
\theoremstyle{plain}
\newtheorem{conjecture}[theorem]{Conjecture}
\newtheorem{fact}[theorem]{Fact}
\newtheorem{lemma}[theorem]{Lemma}
\newtheorem{proposition}[theorem]{Proposition}
\newtheorem{remark}[theorem]{Remark}
\numberwithin{theorem}{section}
\def\COMMENT#1{}
\let\COMMENT=\footnote
\begin{document}

\title{The threshold for powers of tight Hamilton cycles in random hypergraphs}

\author[Y.~Chang]{Yulin Chang}
\address{(Y. Chang) Data Science Institute, Shandong University, Jinan, 250100, China}
\email{ylchang@sdu.edu.cn}

\author[J.~Han]{Jie Han}
\address{(J. Han) School of Mathematics and Statistics and Center for Applied Mathematics, Beijing Institute of Technology, Beijing, 100000, China}
\email{han.jie@bit.edu.cn}

\author[L.~Sun]{Lin Sun}
\address{(L. Sun) School of Mathematics, Shandong University, Jinan, 250100, China}
\email{linsun77@163.com}

\thanks{Y.~Chang is partially supported by the China Postdoctoral Science Foundation (2022M711926, 2022T150386), Natural Science Foundation of China (12201352) and Natural Science Foundation of Shandong Province (ZR2022QA083). J.~Han is partially supported by Natural Science Foundation of China (12371341).}

\begin{abstract}
We investigate the occurrence of powers of tight Hamilton cycles in random hypergraphs.
For every $r\ge 3$ and $k\ge 1$, we show that there exists a constant $C > 0$ such that if $p=p(n) \ge Cn^{-1/\binom{k+r-2}{r-1}}$ then asymptotically almost surely the random hypergraph $H^{(r)}(n,p)$ contains the $k$th power of a tight Hamilton cycle.
This improves on a result of Parczyk and Person, who proved the same result under the assumption $p=\omega\left(n^{-1/\binom{k+r-2}{r-1}}\right)$ using a second moment argument.
\end{abstract}

\date{\today}

\maketitle

%{\bf Keywords:} thresholds, powers of Hamilton cycles, random hypergraphs

\section{Introduction}
For $r\geq 2$, an \emph{$r$-uniform hypergraph} (or \emph{$r$-graph}, for short) $H=(V,E)$ consists of a vertex set $V$ of order $n$ and an edge set $E$, where $E$ is a family of $r$-subsets of $V$, i.e., $E\subseteq \binom{V}{r}$.
When $E= \binom{V}{r}$, the $r$-graph $H$ is referred to as a \emph{complete $r$-graph}, denoted by $K_n^{(r)}$.
In the specific case of $r=2$, we simply refer to it as a \emph{graph} and denote it by $G=(V,E)$.

The investigation of Hamiltonicity and its associated problems is undoubtedly one of the most central and fruitful area in graph theory.
Notably, one of them is the classical Dirac's theorem~\cite{Dirac}, which asserts that every graph on $n$ $(n\geq 3)$ vertices with minimum degree at least $n/2$ contains a Hamilton cycle.
After a span of 20 years, Karp~\cite{K1972} demonstrated that it is NP-complete to determine whether a graph has a Hamilton cycle.
Nevertheless, there are numerous significant results which derive sufficient conditions for this property.
In recent years, there has been considerable attention directed towards generalizations of Dirac’s theorem and related problems in hypergraphs.
We recommend the surveys~\cites{RR2010,Zhao2016} to the reader for a comprehensive discussion on this subject.

The object of this research is to study Hamiltonicity in random hypergraphs.
In the realm of random graph theory, a fundamental question is to determine for which values of $p$ does the random hypergraph $H^{(r)}(n,p)$ asymptotically almost surely\footnote{We say that an event occurs \emph{asymptotically almost surely}, or \emph{a.a.s.}~for brevity, if the probability that it happens tends to $1$ as $n$ tends to infinity.} contain a copy of a given hypergraph $H$?
Here we extend our investigation to contain powers of tight Hamilton cylces.
Before stating it formally, it is necessary to provide the following definitions.

Given $r\geq 2$, we say that an $r$-graph $H$ contains a \emph{tight Hamilton cycle} if its vertices can be ordered cyclically such that every edge consists of $r$ consecutive vertices and every pair of consecutive edges (in the natural ordering of the edges) intersects in exactly $r-1$ vertices.
For $k\geq 1$, the \emph{$k$th power of a tight Hamilton cycle} (or \emph{$(r,k)$-cycle}, for short) denoted by $HC_{r}^k$ is an $r$-graph whose vertices can be ordered cyclically so that each consecutive $r+k-1$ vertices span a copy of $K^{(r)}_{r+k-1}$ and there are no other edges than the ones forced by this condition.
This extends the notion of tight Hamilton cycles in hypergraphs, which corresponds to the case $k=1$.
A \emph{random $r$-uniform hypergraph} $H^{(r)}(n,p)$ with vertex set $[n]:=\{1,\ldots,n\}$ is obtained by adding each possible $r$-subset of $[n]$ independently with probability $p=p(n)$. 
In the special case $r= 2$, we denote it by $G(n,p)$ which is the usual binomial \emph{random graph}.

Let us begin by discussing the case of binomial random graphs when $r=2$. 
For $k=1$, P\'{o}sa~\cite{Posa1} and Kor\v{s}hunov~\cite{Kors} independently showed that the threshold for Hamiltonicity in $G(n, p)$ is $\log n/n$.
This result was later improved by Koml\'{o}s and Szemer\'{e}di~\cite{KS1983}, who derived an exact formula for the probability of the existence of a Hamilton cycle.
For $k\ge 3$, it follows from a more general result of Riordan~\cite{R2000} that the threshold for the appearance of the $k$th power of a Hamilton cycle in $G(n,p)$ is $n^{-1/k}$, which was observed by K\"{u}hn and Othus~\cite{KO2012}.
However, the case $k=2$ is more challenging.
K\"{u}hn and Othus~\cite{KO2012} conjectured that the threshold for the square of a Hamilton cycle in $G(n, p)$ is $n^{-1/2}$, and they proved that $p\geq n^{-1/2+o(1)}$ is sufficient for the existence of the square of a Hamilton cycle in $G(n, p)$.
This bound was further improved by Nenadov and \v{S}kori\'{c}~\cite{NS2019}; Fischer, \v{S}kori\'{c}, Steger and Truji\'{c}~\cite{FSST22}; and finally, the threshold was recently determined by Kahn, Narayanan, and Park~\cite{KNP2021}, confirming the conjecture of K\"{u}hn and Othus~\cite{KO2012}.

Moving on to the case when $r\geq 3$, the study of Hamilton cycles in random hypergraphs is a more recent development.
Frieze~\cite{Frieze10} initiated the investigation by considering loose Hamilton cycles in random 3-uniform hypergraphs (the definition is omitted here, refer to~\cite{Frieze10} for more details).
Dudek and Frieze~\cite{DF2013} used a second moment argument to determine the sharp threshold for a tight Hamilton cycle in $H^{(r)}(n,p)$ is $e/n$ for $r \ge 4$.
In the case of $r = 3$, they showed that $H^{(3)}(n,p)$ contains a tight Hamilton cycle when $p=\omega(1/n)$. 
For $r\geq 3$ and $k\geq 2$, Parczyk and Person~\cite[Theorem 3.7]{PP2016} indicated that the threshold for the existence of the $k$th power of a tight Hamilton cycle in $H^{(r)}(n, p)$ is $n^{-\binom{r+k-2}{r-1}^{-1}}$.
More recently, the (sharp) threshold for nonlinear cycles are determined by Narayanan and Schacht~\cite{NS2020}.

In this note, we apply the idea of Narayanan and Schacht~\cite{NS2020} to further compute the threshold for the appearance of the $k$th power of a tight Hamilton cycle in $H^{(r)}(n, p)$, which improves upon the result of Parczyk and Person~\cite{PP2016}.
As in~\cite{NS2020}, this requires a second moment estimate together with a powerful theorem of Friedgut~\cite{Friedgut2005} characterizing coarse thresholds.

\begin{theorem}\label{main}
    Let $r\ge 3$ and $k\ge 1$ be integers. 
    There exists a constant $C>0$ such that if $p\ge Cn^{-1/\binom{k+r-2}{r-1}}$ then $H^{(r)}(n,p)$ a.a.s.~contains the $k$th power of a tight Hamilton cycle.
\end{theorem}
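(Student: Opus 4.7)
The plan is to combine the second moment estimate of Parczyk and Person with Friedgut's theorem on coarse thresholds. Write $p^\ast(n) := n^{-1/\binom{k+r-2}{r-1}}$ and let $\mathcal{Q}_n$ denote the monotone property of containing $HC_r^k$. The work of Parczyk and Person already gives $\mathcal{Q}_n$ a.a.s.\ at $p = \omega(p^\ast)$; the remaining task is to shave the $\omega(1)$ factor down to a constant. This is exactly the kind of gap that Friedgut's criterion is designed to close, so the proof should proceed by contradiction through Friedgut's machinery, in the spirit of Narayanan--Schacht.

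Suppose $\mathcal{Q}_n$ has a coarse threshold at scale $p^\ast$. Then there exist a constant $\alpha > 0$ and a sequence $p_n = \Theta(p^\ast)$ along which $\Pr[H^{(r)}(n,p_n) \in \mathcal{Q}_n]$ is bounded away from both $0$ and $1$. Friedgut's theorem then supplies a \emph{booster}: a fixed $r$-graph $F$ of bounded size such that planting a uniformly random labelled copy of $F$ on top of $H^{(r)}(n, p_n)$ raises the probability of $\mathcal{Q}_n$ by at least $\alpha$. The goal is to refute the existence of any such booster, which will force the threshold to be sharp and yield the theorem.

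To execute the refutation I would split the randomness via a two-round exposure, writing $p_n = p_{\mathrm{main}} + p_{\mathrm{spr}}$ with $p_{\mathrm{main}} = (1-\varepsilon)p_n$ and a small sprinkling $p_{\mathrm{spr}} = \varepsilon p_n$. Adapting the Parczyk--Person second moment argument to $H^{(r)}(n, p_{\mathrm{main}})$, I would show that with constant probability the main round already carries an \emph{almost spanning} $(r,k)$-configuration: a disjoint union of long segments of the $k$th power of a tight path covering $n - o(n)$ vertices, together with a rich collection of candidate connectors between segment ends and a small reservoir to absorb the leftover vertices. The sprinkling $H^{(r)}(n, p_{\mathrm{spr}})$, being a random hypergraph of density still of order $p^\ast$, should then stitch this skeleton into a genuine $HC_r^k$ with probability $1 - o(1)$, independently of whether any fixed booster $F$ has been planted. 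Since both the existence of the skeleton and its completion are insensitive to the planted copy of $F$, the conditional boost guaranteed by Friedgut must be $o(1)$, contradicting the $\alpha$-gap.

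The main obstacle is making the ``almost spanning plus completion'' step quantitatively robust. Parczyk--Person count exact copies of $HC_r^k$; here one needs concentration of a near-spanning object together with enough structural flexibility so that a constant-density sprinkle on the remaining vertices a.a.s.\ closes the cycle. This requires a variance bound on the number of almost-spanning configurations with prescribed uncovered set and link structure, plus an absorbing or connecting lemma tailored to the $(r,k)$-geometry. Verifying both, and checking that the information carried by $H^{(r)}(n,p_{\mathrm{main}})$ is genuinely independent of any fixed planted $F$, is the quantitative heart of the argument; the Friedgut step itself is used as a black box.
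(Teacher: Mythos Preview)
Your high-level plan---a second-moment bound giving constant probability at $Cp^\ast$, then Friedgut to upgrade to a sharp threshold---matches the paper. The gap is in how you refute the coarse-threshold alternative. Friedgut's criterion (as stated and used here, following Narayanan--Schacht) does not say that planting $\tilde F$ on top of the \emph{random} hypergraph $H^{(r)}(n,p_n)$ boosts the probability; it hands you a \emph{deterministic} $H_n\notin\mathcal Q_n$ together with the two inequalities
\[
\mathbb{P}\bigl(H_n\cup H^{(r)}(n,\beta\hat p)\in\mathcal Q_n\bigr)<1-2\alpha
\quad\text{and}\quad
\mathbb{P}\bigl(H_n\cup\tilde F\in\mathcal Q_n\bigr)>1-\alpha.
\]
Your two-round exposure with an almost-spanning skeleton plus sprinkling never engages with this fixed $H_n$, and even in your own framing the logic is incomplete: showing that the sprinkle completes the skeleton with probability $1-o(1)$ \emph{on the event that the skeleton exists} says nothing about whether $\tilde F$ helps on the complementary event, so it does not bound the boost by $o(1)$. (And if a skeleton-plus-absorption scheme really delivered $\mathcal Q_n$ a.a.s.\ at density $\Theta(p^\ast)$, you would have the theorem directly and Friedgut would be superfluous.)

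The actual refutation is both shorter and cleaner. Since $H_n\notin\mathcal Q_n$ but $H_n\cup\tilde F\in\mathcal Q_n$ with probability exceeding $1-\alpha$, the random copy $\tilde F$ can only help by supplying a fragment of $HC_r^k$; by pigeonhole there is a fixed $(r,k)$-path $P$ on $v_P$ vertices with $e_P$ edges and a constant $\gamma>0$ such that a $\gamma$-fraction of all labelled placements of $P$ in $V(H_n)$ are \emph{useful}, i.e.\ complete an $HC_r^k$ together with $H_n$. Now count the useful copies $Y$ of $P$ created by the sprinkle $H^{(r)}(n,\beta\hat p)$: one has $\mathbb E[Y]=\Omega(n^{v_P}\hat p^{\,e_P})$, and since $\hat p=\Theta(p^\ast)$ and $e_P=\bigl(v_P-\tfrac{(r-1)(k+r-1)}{r}\bigr)\binom{k+r-2}{r-1}$, Janson's inequality gives $\mathbb P(Y=0)\to 0$. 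Hence the sprinkle a.a.s.\ lands a useful copy of $P$, contradicting the first displayed inequality. No absorbing or connecting lemma is needed. As a secondary point, the paper does not quote Parczyk--Person for the second moment but proves its own bound $\mathbb E[X^2]=O_{r,k}(\mathbb E[X]^2)$ directly at $p=Cp^\ast$ (Lemma~\ref{lem:expect}); this is the main technical work and is what yields an explicit constant~$C$, though for mere existence of $C$ your appeal to Parczyk--Person would also pin down $\hat p=\Theta(p^\ast)$.
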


The initial motivation of this project is to make progress on a conjecture in~\cite{KNP2021}, who conjectured the following.
\begin{conjecture}[{\cite[Conjecture 1.3]{KNP2021}}]
For fixed $\varepsilon > 0$ and $p\ge (1+\varepsilon)\sqrt{e/n}$, $G(n, p)$ a.a.s.~contains a square of a Hamilton cycle.
\end{conjecture}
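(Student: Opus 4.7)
The plan is to follow the Friedgut coarse-threshold strategy of Narayanan and Schacht~\cite{NS2020} as signposted in the introduction. Write $p^\ast = p^\ast(n) := n^{-1/\binom{k+r-2}{r-1}}$ and let $Q$ denote the monotone property that an $r$-uniform hypergraph contains a copy of $HC_{r}^k$. Argue by contradiction: suppose no constant $C$ makes the conclusion hold. Combined with the Parczyk-Person result~\cite{PP2016} that $p = \omega(p^\ast)$ already suffices for $Q$ a.a.s., this forces $Q$ to have a coarse threshold at scale $p^\ast$, meaning there is $\delta > 0$ and arbitrarily large $C$ giving $\Pr[H^{(r)}(n, Cp^\ast) \in Q] \in (\delta, 1-\delta)$ for infinitely many $n$. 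Friedgut's theorem~\cite{Friedgut2005} then produces a fixed $r$-graph $F$, constants $\eta, \alpha > 0$, and a probability $p = \Theta(p^\ast)$ such that $\Pr[H^{(r)}(n,p) \in Q] < 1 - 3\eta$ while planting a uniformly random labeled copy of $F$ on top of $H^{(r)}(n,p)$ raises the probability of $Q$ above $1-\eta$, and moreover sprinkling in $\alpha p^\ast \binom{n}{r}$ further random edges would still leave the property below $1-2\eta$. The goal is to refute this dichotomy.

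The core computation is a \emph{rooted} second moment argument. For each embedding of $F$ into $[n]$, I count copies of $HC_{r}^k$ in $H^{(r)}(n,p)$ whose edge set contains the planted copy of $F$, and show via first and second moment estimates that this random count is asymptotically the same as what one would obtain from $H^{(r)}(n, p(1+o(1)))$ without planting. In other words, planting a bounded-size $F$ is no more effective than a tiny density sprinkle, contradicting Friedgut's conclusion. The ingredients are the Parczyk-Person second moment estimate for copies of $HC_{r}^k$, which I would extend from $p = \omega(p^\ast)$ down to $p = Cp^\ast$ by a sharper variance bookkeeping; and a balance condition on $HC_{r}^k$ asserting that every sub-$r$-graph $J \subseteq HC_{r}^k$ satisfies $e(J) \le \binom{k+r-2}{r-1}\, v(J)$, with equality essentially only for long contiguous arcs of the cycle.

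The main obstacle is verifying this strict balance statement and using it to control the variance of the rooted count. An overlap $J$ between two copies of $HC_{r}^k$ decomposes into arcs of consecutive cycle vertices glued at short junctions; one must show that non-contiguous fragments are strictly less edge-dense than contiguous arcs of the same total size, with defect growing with the number of junctions. I expect this to follow from an arc-by-arc analysis, using that each additional vertex appended to a contiguous arc contributes exactly $\binom{k+r-2}{r-1}$ new edges (the local clique increment from $K^{(r)}_{r+k-1}$), whereas each junction loses at least one edge per appended vertex. Once strict balance is established, the rooted second moment gives $\mathrm{Var} = o(\mathbb{E}^2)$, the planted $F$ is absorbable into a vanishing density shift, and the Friedgut contradiction closes the proof.
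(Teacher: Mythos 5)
This statement is a \emph{conjecture} that the paper cites and explicitly leaves open; the paper's Theorem~\ref{main} deliberately assumes $r\ge 3$ and the authors state in the paragraph after the conjecture that ``our proof of Theorem~\ref{main} fails for $r=2$,'' pointing to the proof of Lemma~\ref{k}. Your proposal applies exactly that $r\ge 3$ machinery to the $r=2$, $k=2$ case, so it cannot succeed as written, and moreover it aims at a weaker conclusion than the conjecture asks for.

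The concrete failure is in the second-moment bookkeeping. After Fact~\ref{f4} and Propositions~\ref{p2},~\ref{p1}, one controls the overlap sum $\sum_{b,s} N_\sigma(b,s)p^{-b}$ by setting $x=|V(P)|-s$ and using $x\ge b\binom{k+r-2}{r-1}^{-1}+(r-2)s$; the factor $n^{-(r-2)s}$ coming from $(e/n)^x$ is what cancels the $(en/s)^s$ produced by choosing the $s$ component roots. This cancellation needs $(r-2)s\ge s$, i.e.\ $r\ge 3$. When $r=2$, each component of the overlap is \emph{too edge-dense relative to its vertex count} — a contiguous arc of the square of a cycle on $\ell$ vertices carries $2\ell-3$ edges, so $|V(P_i)|\approx b_i/2+3/2$ and the per-component vertex surplus is too small to pay for the root choices. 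The balance inequality $e(J)\le\binom{k+r-2}{r-1}v(J)$ that you propose (``with defect growing with the number of junctions'') is true but is precisely the $r\ge3$ statement: for $r=2$, $k=2$ the defect per junction is only $1/2$ of a vertex, not the full vertex needed, and the sum over $s$ diverges. This is the same structural reason the plain second moment famously fails already for ordinary Hamilton cycles ($r=2$, $k=1$), where the actual threshold $\log n/n$ sits strictly above the first-moment prediction $e/n$.

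There is a second, independent gap: even granting a working second moment at some $p=C n^{-1/2}$ with fixed $C$, the Friedgut/Paley--Zygmund route establishes the existence of \emph{some} sharp threshold $\hat p=\Theta(n^{-1/2})$ and gives an explicit admissible constant $C$. It does \emph{not} identify the constant in $\hat p$. The conjecture, however, demands the sharp constant $\sqrt{e}$, matching the first-moment threshold to within a $(1+\varepsilon)$ factor. Pinning down that constant requires a genuinely stronger tool than Friedgut plus second moment; this is exactly why Kahn, Narayanan, and Park needed the fragmentation/spread machinery to get even $\Theta(n^{-1/2})$, and why their paper left the sharp constant as Conjecture 1.3. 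Your proposal does not engage with that technology, so even the weaker $\Theta$-level claim for $r=2$ is out of reach by the route you sketch.
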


The (conjectued) threshold $\sqrt{e/n}$ is obtained by standard first-moment method.
The main result of~\cite{KNP2021} gives a universal constant $K$ and our goal was to improve it to a computable one.
However, our proof of Theorem~\ref{main} fails for $r=2$ (see the proof of Lemma~\ref{k}).
Nevertheless, it seems very likely that the proof method in~\cite{KNP2021} applies in our case but we prefer the current one that gives an explicit constant $C= 4er^{2}\binom{k+r-2}{r-1}((2k+2r-3)e)^{\binom{k+r-2}{r-1}^{-1}}$.

The organisation of this paper is as follows. 
In Section 2 we give some fundamental tools and properties of power of Hamilton cycles, which will be used to prove our key lemmas and Theorem~\ref{main}. 
In Section 3 we provide the main lemma (Lemma~\ref{lem:expect}) and the proof of Theorem~\ref{main}. 
In Section 4 we prove Lemma~\ref{lem:expect}.

\section{Preliminaries}
The following probability inequalities will be useful in our proof, which can be found, e.g., in [12, Corollary 2.3].
\begin{lemma}[Paley–Zygmund inequality]\label{lem:PZ}
    If $X$ is a nonnegative random variable, then
\[\mathbb{P}(X>0)\ge \frac{\mathbb{E}[X]^2}{\mathbb{E}[X^2]}.\]
\end{lemma}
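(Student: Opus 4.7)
The plan is to deduce the bound from the Cauchy--Schwarz inequality applied to the trivial factorization of $X$ through the indicator of its support. Since $X$ is nonnegative, one has the pointwise identity $X = X \cdot \mathbf{1}_{\{X>0\}}$, so taking expectations yields $\mathbb{E}[X] = \mathbb{E}[X \cdot \mathbf{1}_{\{X>0\}}]$. This converts the problem of lower-bounding $\mathbb{P}(X>0) = \mathbb{E}[\mathbf{1}_{\{X>0\}}]$ in terms of $\mathbb{E}[X]$ and $\mathbb{E}[X^2]$ into a second-moment comparison.

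I would then apply Cauchy--Schwarz in the form $\mathbb{E}[YZ]^2 \le \mathbb{E}[Y^2]\,\mathbb{E}[Z^2]$ with $Y=X$ and $Z=\mathbf{1}_{\{X>0\}}$, using the elementary fact $\mathbf{1}_{\{X>0\}}^2 = \mathbf{1}_{\{X>0\}}$. This gives
\[
\mathbb{E}[X]^2 \;=\; \mathbb{E}\bigl[X \cdot \mathbf{1}_{\{X>0\}}\bigr]^2 \;\le\; \mathbb{E}[X^2] \cdot \mathbb{E}\bigl[\mathbf{1}_{\{X>0\}}\bigr] \;=\; \mathbb{E}[X^2] \cdot \mathbb{P}(X>0).
\]
Dividing by $\mathbb{E}[X^2]$ (when positive) yields the claimed inequality.

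The only point requiring a small comment is the degenerate case $\mathbb{E}[X^2]=0$: here $X=0$ almost surely, so $\mathbb{E}[X]=0$ and $\mathbb{P}(X>0)=0$, and the stated inequality holds under the natural convention $0/0 = 0$ (or is vacuous since the right-hand side is undefined). There is no serious obstacle: the entire argument is a single application of Cauchy--Schwarz, and the statement can be viewed as a ``reverse Markov'' inequality that is sharp, e.g., when $X$ takes only the values $0$ and a single positive constant.
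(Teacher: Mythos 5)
Your proof is correct and is the standard Cauchy--Schwarz argument for the Paley--Zygmund (or ``second-moment'') inequality; the paper itself does not prove this lemma but simply cites it to a reference. Nothing to add: the pointwise identity $X = X\cdot\mathbf{1}_{\{X>0\}}$, Cauchy--Schwarz, and the idempotence of the indicator give the bound cleanly, and your handling of the degenerate case $\mathbb{E}[X^2]=0$ is appropriate.
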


%Getting Theorem~\ref{main} from this just requires applying 
We need the celebrated machinery of Friedgut~\cite{Friedgut2005} to say that the property of containing the $k$th power of a tight Hamilton cycle has a sharp threshold.
\begin{lemma}[{\cite{Friedgut2005}}]\label{lem:coarse}
Fix $r\in \mathbb{N}$ and let $W =(W_n)_{n\ge 0}$ be a monotone $r$-graph property that has a coarse threshold. 
Then there exists a constant $\alpha>0$, a threshold function $\hat{p} =\hat{p}(n)$ with
\[\alpha< \mathbb{P}(H^{(r)}(n, \hat{p})\in W_n) < 1 -\alpha\]
for all $n\in \mathbb{N}$, a constant $\beta>0$ and a fixed $r$-graph $F$ such that the following holds: for infinitely many $n\in \mathbb{N}$, there exists an $r$-graph on $n$ vertices $H_n\notin W_n$ such that
\[\mathbb{P}(H_n\cup H^{(r)}(n, \beta\hat{p})\in W_n) < 1-2\alpha,\]
where the random $r$-graph $H^{(r)}(n, \beta\hat{p})$ is taken to be on the same vertex set as $H_n$, and
\[\mathbb{P}(H_n\cup \tilde F \in W_n) > 1-\alpha,\]
where $\tilde F$ denotes a random copy of $F$ on the same vertex set as $H_n$.
\end{lemma}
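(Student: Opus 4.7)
The plan is to adapt Friedgut's coarse-threshold framework, proved originally for graphs and extended here to $r$-uniform hypergraphs: the overarching idea is that a coarse threshold forces the existence of a bounded-size local ``booster'' subhypergraph $F$ whose insertion is responsible for crossing into $W_n$.

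First I would translate the coarse-threshold hypothesis into a quantitative bound on total influence via Russo's lemma. Writing $f_n(p) := \mathbb{P}(H^{(r)}(n,p) \in W_n)$ and letting $\mu_p$ denote the corresponding product measure on $\binom{[n]}{r}$, monotonicity gives $p f_n'(p) = \sum_e \mu_p(e\text{ is pivotal for }W_n)$, which is the total influence of $\mathbf{1}_{W_n}$ scaled by $p$. The definition of a coarse threshold is precisely that $f_n$ traverses from $\alpha$ to $1-\alpha$ over a multiplicative range of $p$, so along an infinite subsequence of $n$ we may choose $\hat p = \hat p(n)$ with $\alpha < f_n(\hat p) < 1 - \alpha$ and $\hat p \, f_n'(\hat p) \le K$ for an absolute constant $K = K(\alpha)$.

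Next I would invoke the Bourgain--Friedgut junta theorem for monotone Boolean functions with bounded total influence under biased product measures. Applied to $\mathbf{1}_{W_n}$ on the cube indexed by $\binom{[n]}{r}$ with measure $\mu_{\hat p}$, this step produces constants $t = t(\alpha, K)$ and $\delta = \delta(\alpha, K) > 0$, and for each such $n$ a subhypergraph $S_n$ on at most $t$ vertices with $\mu_{\hat p}(W_n \mid S_n \subseteq \cdot) \ge \mu_{\hat p}(W_n) + \delta$. By monotonicity $S_n$ may be taken to be a concrete $r$-graph, and pigeonholing on its isomorphism type (finitely many possibilities, since $|V(S_n)| \le t$) extracts a single $r$-graph $F$ serving as a booster for infinitely many $n$. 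This is the technical heart and the step I expect to be the main obstacle: its proof rests on a Bonami--Beckner-type hypercontractive estimate carefully adapted to the biased measure $\mu_{\hat p}$ as $\hat p = \hat p(n) \to 0$, whose constants must be tracked through a Fourier-analytic decomposition of $\mathbf{1}_{W_n}$.

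Finally, I would construct the adversarial $H_n$ and close the sprinkling side. Since $f_n(\hat p) > \alpha$, a sample from $\mu_{\hat p}$ avoids $W_n$ with probability at least $\alpha$; among such samples, an averaging argument exploiting the coarseness bound $\hat p \, f_n'(\hat p) \le K$ yields an $H_n \notin W_n$ for which $\mu_{\beta \hat p}\!\left(H_n \cup \cdot \in W_n\right) < 1 - 2\alpha$ once $\beta$ is chosen sufficiently small in terms of $\alpha$ and $K$ (morally, sprinkling only $O(\beta)$ extra expected pivotal edges on top of $H_n$ cannot reliably push across the transition, since the derivative $f_n'$ is too small). This gives the first inequality. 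For the second inequality, the booster property of $F$ supplied by the previous step ensures that planting a uniform random copy $\tilde F$ inflates the membership probability by at least $\delta$, and by rescaling $\alpha$ if necessary (or iterating and taking $F$ to be a disjoint union of booster copies) one can ensure $\mathbb{P}(H_n \cup \tilde F \in W_n) > 1 - \alpha$, completing the construction.
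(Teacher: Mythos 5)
The paper gives no proof of this lemma---it is quoted directly from Friedgut~\cite{Friedgut2005}, where the argument rests on Bourgain's theorem on monotone Boolean functions with small total influence under strongly biased product measures (the appendix to Friedgut's 1999 $k$-SAT paper). There is therefore no in-paper proof to compare your proposal against; the authors' decision to cite rather than reprove is the natural one for a result of this depth.

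That said, your sketch is a faithful high-level outline of how Friedgut's theorem is actually established: Russo's lemma converts the coarse-threshold hypothesis into a bound $\hat p\, f_n'(\hat p)\le K$ at some $\hat p$ in the threshold window; Bourgain's structure theorem then shows that either $\mathbf{1}_{W_n}$ is essentially a junta---impossible for a nontrivial monotone hypergraph property on $n$ vertices---or there is a bounded-size booster $S_n$ with the conditional-probability bump you describe; pigeonholing on isomorphism type yields a single $F$ valid for infinitely many $n$; and a Markov/averaging argument over $H\sim\mu_{\hat p}$ conditioned on $H\notin W_n$ produces the adversarial $H_n$. Two caveats are worth flagging. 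First, to obtain one $H_n$ satisfying both displayed inequalities simultaneously one must run the averaging jointly, excluding via a union bound the two ``bad'' events in which either sprinkling succeeds too often or a random copy of $F$ helps too rarely; your write-up treats the two inequalities in sequence, which needs this small adjustment. Second, as you yourself acknowledge, the Bourgain step is the real content: it is a substantial hypercontractive Fourier-analytic argument on the $\mu_{\hat p}$-biased cube whose constants must be tracked as $\hat p\to 0$, and reproducing it is far beyond what a sketch can carry. So your proposal is an accurate roadmap rather than a proof; in the context of this paper, citing Friedgut, as the authors do, is the correct move.
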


We shall also require the following three results about the properties of $HC^{k}_{r}$.
Given a $r$-graph $H=(V,E)$ and a $d$-subset $S\subseteq V$ with $1\leq d\leq k-1$. 
We define the \emph{degree} of $S$, denoted by $\deg_H(S)$, to be the number of edges in $H$ containing $S$, that is, $\deg_H(S)=\left|\left\{e\in E\colon\, e\supseteq S\right\}\right|$.
The \emph{maximum $d$-degree $\Delta_{d}(H)$} of $H$ is the maximum of $\deg_H(S)$ over all $d$-subsets $S$ of $V$.
We refer to $\Delta(H):=\Delta_{1}(H)$ as the \emph{maximum vertex degree} of $H$.
\begin{fact}\label{max}
Let $r\ge 2$ and $k\ge 1$ be integers. Then $\Delta(HC^{k}_{r})=r\binom{k+r-2}{r-1}$.
\end{fact}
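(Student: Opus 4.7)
The plan is to exploit the cyclic symmetry built into the definition of $HC_r^k$: shifting the cyclic vertex ordering by one position is an automorphism, so the hypergraph is vertex-transitive and in particular regular. Hence $\Delta(HC_r^k)$ coincides with the common vertex degree, which equals the average degree $r\,|E(HC_r^k)|/n$, and the task reduces to a single edge count.

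To count $|E(HC_r^k)|$, I would assign each edge to a canonical vertex: for every edge $e$ pick its \emph{leftmost} vertex in the cyclic ordering, meaning the unique vertex $w$ such that $e$ is contained in the window consisting of $w$ together with the $r+k-2$ vertices immediately following $w$ (and no earlier starting window contains $e$). Equivalently, since $e$ fits in some window of $r+k-1$ consecutive vertices, one takes $w$ to be the smallest-indexed vertex of $e$ within that window. For a fixed vertex $w$, the edges canonically assigned to $w$ are in bijection with the $(r-1)$-subsets of the $r+k-2$ positions strictly to the right of $w$, giving exactly $\binom{r+k-2}{r-1}$ such edges. Summing over all $n$ vertices yields $|E(HC_r^k)| = n\binom{r+k-2}{r-1}$.

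Combining the two observations, $\Delta(HC_r^k) = r\,|E(HC_r^k)|/n = r\binom{k+r-2}{r-1}$, as claimed. The only point requiring care is verifying that the ``leftmost vertex'' is well defined, which needs $n$ to be large enough (certainly $n \ge r+k$ suffices) so that any window of $r+k-1$ consecutive vertices carries a genuine linear order with a unique minimum; this is implicit since we are interested in the asymptotic regime. No real obstacle arises, but one could alternatively give a direct enumeration by fixing a vertex $v$ and summing, over $a \in \{0, 1, \ldots, r+k-2\}$, the number of edges containing $v$ whose leftmost vertex is $v-a$, which yields $\binom{r+k-2}{r-1} + (r+k-2)\binom{r+k-3}{r-2} = r\binom{r+k-2}{r-1}$ by a standard Pascal-type identity; this serves as a sanity check on the symmetry argument.
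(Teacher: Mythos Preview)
Your proof is correct. The primary route you take---vertex-transitivity, hence regularity, hence $\Delta = r|E|/n$, combined with the edge count $|E| = n\binom{r+k-2}{r-1}$ via the leftmost-vertex assignment---is genuinely different from the paper's argument. The paper instead fixes a vertex $v_i$ and directly enumerates the edges through it, grouping them by their \emph{last} vertex in the cyclic order: those with last vertex $v_i$ contribute $\binom{k+r-2}{r-1}$, and for each of the $k+r-2$ vertices $v_j$ with $j>i$ in the window, those with last vertex $v_j$ and containing $v_i$ contribute $\binom{k+r-3}{r-2}$, summing to $r\binom{k+r-2}{r-1}$. This is exactly the mirror image of the direct enumeration you sketch as a sanity check at the end, so your alternative computation \emph{is} the paper's proof up to reversing the ordering. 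Your handshaking approach has the mild advantage of simultaneously yielding the total edge count $m = n\binom{k+r-2}{r-1}$, which the paper uses separately in Section~4; the paper's direct count is marginally shorter and sidesteps the (minor) need to justify that the leftmost vertex is well defined on a cycle.
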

\begin{proof}
Suppose that $(v_1,\ldots,v_n)$ is a cyclic ordering of $HC^{k}_{r}$ such that each consecutive $r+k-1$ vertices span a copy of $K^{(r)}_{r+k-1}$.
For any $v_i\in V( HC^{k}_{r})$, we have 
\[\deg_{ HC^{k}_{r}}(v_i)=\binom{k+r-2}{r-1}+(k+r-2)\binom{k+r-3}{r-2}=r\binom{k+r-2}{r-1},\]
where we collect all edges containing $v_i$ as the last vertex following the ordering and all edges containing $v_i$ and $v_j$ such that $v_j$ is the last vertex following the ordering for each $j\in \{i+1,\ldots,i+r+k-2\}$, of course with indices being considered cyclically modulo $n$.
\end{proof}

\begin{fact}\label{f4}
Let $r\ge 2$ and $k\ge 1$ be integers. 
Suppose	that $P$ is a subhypergraph of $HC^{k}_{r}$ with $b$ edges and $s$ components.
Then $|V(P)|\ge b\binom{k+r-2}{r-1}^{-1}+(r-1) s$.
\end{fact}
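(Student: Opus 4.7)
The plan is to split $P$ into its $s$ connected components and prove the inequality componentwise. Since both $|V(P)|$ and the edge count $b$ are additive over components, it suffices to show that every connected subhypergraph $P'$ of $HC^{k}_{r}$ with $b'$ edges and $v'$ vertices satisfies $v' \ge b'\binom{k+r-2}{r-1}^{-1} + (r-1)$. Summing this inequality across all $s$ components would then yield the stated bound.

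For the connected case, I would fix a cyclic ordering $v_{1}, \dots, v_{n}$ of the vertices of $HC_{r}^{k}$ witnessing the definition of the $(r,k)$-cycle, and then linearize by cutting the cycle at some vertex outside $V(P')$ (possible whenever $V(P')$ is a proper subset of $V(HC_{r}^{k})$). Order the vertices of $V(P')$ as $u_{1}, \dots, u_{v'}$ according to the resulting arc order. For each edge $e$ of $P'$, let $L(e)$ denote the vertex of $e$ appearing last in this order. Since every edge of $HC_{r}^{k}$ is contained in some window of $r+k-1$ consecutive vertices, the remaining $r-1$ vertices of $e$ must lie among the at most $k+r-2$ predecessors of $L(e)$ that belong to $V(P')$. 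Hence each $u_{j}$ is the last vertex of at most $\binom{k+r-2}{r-1}$ edges, and for $j \le r-1$ this count is zero since $u_{j}$ has fewer than $r-1$ predecessors in $V(P')$ available. Summing the per-vertex bounds gives $b' \le (v' - r + 1)\binom{k+r-2}{r-1}$, which rearranges to the desired inequality.

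The only delicate step is the linearization: for a component whose vertex set exhausts all of $V(HC_{r}^{k})$, the cyclic ordering cannot be cut at an absent vertex. This degeneracy, however, corresponds to $|V(P)| = n$ with $s=1$, a regime that does not arise in the second moment computations that use this fact (where the relevant subhypergraphs are far smaller than the whole cycle), so the arc argument above is what actually needs to be carried out. The rest is straightforward bookkeeping from the definitions of $HC_{r}^{k}$ and of the last-vertex map.
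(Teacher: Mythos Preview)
Your componentwise reduction is exactly what the paper does, and the final summation is identical. The per-component arguments differ: the paper splits into the cases $|V(P_i)|\le k+r-2$ (where the crude bound $b_i\le\binom{|V(P_i)|}{r}\le(|V(P_i)|-r+1)\binom{k+r-2}{r-1}$ already gives what is needed) and $|V(P_i)|\ge k+r-1$ (where $b_i$ is bounded by the edge count of an $(r,k)$-path on $|V(P_i)|$ vertices), while you propose a single last-vertex counting argument.

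There is, however, a genuine gap in your last-vertex argument as written. Cutting at an \emph{arbitrary} vertex $v_c\notin V(P')$ does not guarantee that the remaining $r-1$ vertices of each edge lie among the $k+r-2$ immediate arc-predecessors of $L(e)$: if the cyclic window containing $e$ straddles $v_c$, vertices of $e$ that are cyclically \emph{after} $L(e)$ can end up arc-before it, and then the per-vertex bound $\binom{k+r-2}{r-1}$ fails. Concretely, take $r=2$, $k=3$, $n$ large, $V(P')=\{v_{n-2},v_{n-1},v_n,v_2,v_3\}$, and cut at $v_1$. Then $v_n$ is arc-last, and all four edges $\{v_n,v_{n-1}\},\{v_n,v_{n-2}\},\{v_n,v_2\},\{v_n,v_3\}$ of the induced subgraph have $L(e)=v_n$, exceeding your claimed bound $\binom{3}{1}=3$. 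So the step ``each $u_j$ is the last vertex of at most $\binom{k+r-2}{r-1}$ edges'' is not justified by merely requiring $v_c\notin V(P')$; it needs the stronger hypothesis that no edge of $P'$ has its cyclic span straddling $v_c$ (for instance, $v_c$ lies in a gap of length $\ge k+r-1$), which need not be available. The paper sidesteps this particular issue via its two-case analysis rather than a linearization; note though that the paper's large-$|V(P_i)|$ bound is itself stated without justification and has analogous edge cases, so both routes are really relying on the component sitting inside an arc.
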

\begin{proof}
Suppose that $P$ contains $s$ components $P_1,\ldots, P_s$ with $|E(P_i)|=b_i$ for $i\in[s]$.
Note that $\sum_{i\in [s]}b_{i}=b$ and $|V(P_i)|\ge r$ since $P_i$ is a component. 
We claim $|V(P_i)|\ge b_{i}\binom{k+r-2}{r-1}^{-1}+r-1$ for each $i\in[s]$.
If $|V(P_i)|\ge k+r-1$, then
\[
\begin{aligned}
	b_i &\leq\binom{k+r-1}{r}+\binom{k+r-2}{r-1}\left(|V(P_i)| -(k+r-1)\right).
\end{aligned}
\]
So $|V(P_i)|\ge b_{i}\binom{k+r-2}{r-1}^{-1}-\binom{k+r-1}{r}\binom{k+r-2}{r-1}^{-1}+ k+r-1\ge b_{i}\binom{k+r-2}{r-1}^{-1}+r-1$.
On the other hand, if $r \leq |V(P_i)| \leq k+r-2$, then
\[\ b_i \leq\binom{|V(P_i)|}{r}
\leq(|V(P_i)|-r+1)\binom{|V(P_i)|}{r-1}\le(|V(P_i)|-r+1)\binom{k+r-2}{r-1}.\] 
Thus in both cases, we have $|V(P_i)|\ge b_{i}\binom{k+r-2}{r-1}^{-1}+r-1$.
Summing over all $i\in[s]$, we have the lower bound
$|V(P)|\ge b\binom{k+r-2}{r-1}^{-1}+(r-1)s$.
\end{proof}

\begin{lemma}\label{l1}
Let $r\ge 2$ and $k\ge 1$ be integers. 
Suppose $H$ is a copy of the $k$th power of a tight Hamilton cycle of order $n$.
Then the number of connected subhypergraphs of $H$ containing a given vertex with $b$ edges is at most $e^{b}r^{2b}\binom{k+r-2}{r-1}^{b}$.
\end{lemma}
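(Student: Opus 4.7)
The plan is to reduce the hypergraph counting problem to a counting problem on a single auxiliary graph, and then invoke the standard tree-counting estimate. Define an auxiliary graph $G$ with vertex set $\{v\}\cup E(H)$ by placing an edge between $v$ and each $e\in E(H)$ with $v\in e$, and an edge between every pair of distinct $e,f\in E(H)$ with $e\cap f\ne\emptyset$. Using Fact~\ref{max}, $\deg_G(v)=\deg_H(v)\le r\binom{k+r-2}{r-1}$, while for any $e\in E(H)$,
\[
\deg_G(e)\le\sum_{u\in e}(\deg_H(u)-1)+\mathbf{1}[v\in e]\le r\cdot r\binom{k+r-2}{r-1},
\]
so $\Delta(G)\le r^2\binom{k+r-2}{r-1}$.

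Next, I would argue that each connected subhypergraph $S$ of $H$ with $b$ edges and $v\in V(S)$ is encoded, injectively, by the $(b+1)$-vertex subset $U_S:=\{v\}\cup E(S)\subseteq V(G)$. Indeed, $v$ is adjacent in $G$ to any edge of $S$ through $v$, and $E(S)$ is connected in the line graph of $H$ (by connectedness of $S$), so $G[U_S]$ is connected; one recovers $S$ from $U_S$ by stripping off $v$. Hence it suffices to bound the number of connected $(b+1)$-vertex subsets of $G$ containing $v$.

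For this final step I would invoke the standard tree-counting estimate: in any graph of maximum degree $\Delta$, the number of connected subsets of size $m$ containing a prescribed vertex is at most $(e\Delta)^{m-1}$. Applied with $m=b+1$ and $\Delta=r^2\binom{k+r-2}{r-1}$, this gives $\bigl(er^2\binom{k+r-2}{r-1}\bigr)^b=e^br^{2b}\binom{k+r-2}{r-1}^b$, as required.

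The main obstacle is securing the correct constant $e$ in the tree-counting estimate. A direct DFS encoding of a rooted spanning tree only yields the weaker factor $\binom{2(m-1)}{m-1}\le 4^{m-1}$, which is insufficient for the claimed bound as soon as $b$ is moderately large. The sharper $e^{m-1}$ factor is classical and is obtained either by comparison with the infinite $\Delta$-regular tree (together with a Lagrange/Fuss--Catalan count of rooted subtrees) or via Galvin-style entropy arguments; in the writeup I would simply cite this well-known fact from the random-graph literature rather than reprove it.
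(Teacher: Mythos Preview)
Your proof is correct and follows essentially the same strategy as the paper: encode each connected subhypergraph through $v$ as a connected object in an auxiliary structure on $\{v\}\cup E(H)$ with maximum degree at most $r^{2}\binom{k+r-2}{r-1}$, and then invoke the standard $(e\Delta)^{b}$ bound (which the paper quotes as Fact~\ref{ns} from~\cite{KNP2021}). The only cosmetic difference is that the paper unfolds your graph $G$ into a rooted tree with repeated vertices before applying the fact, whereas you work with $G$ directly; the content and the resulting bound are identical.
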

\begin{proof}
Fix $v\in V(H)$.
We greedily construct a rooted tree $T$ on vertex set $\{v\}\cup E(H)$ with root $v$ as follows, which is permitted to have multiple vertices. 
We start with the empty tree and add the root vertex $v$ to it.
Then we add all edges containing $v$ in $H$ to $T$ as the children of $v$.
Suppose we have $e_i\in V(T)$.
If there is $e_j\in E(H)$ such that $e_i\cap e_j\neq\emptyset$, then we add $e_j$ as a child of $e_i$. 
Furthermore, an edge $e\in E(H)$ is a leaf of $T$ if and only if $e$ appears twice in the only path from $v$ to this vertex.
So $T$ is a finite graph.
Note that $\Delta(T)\le r\Delta(H)=r^{2}\binom{k+r-2}{r-1}$ by Fact~\ref{max}.
Now we use the following fact to estimate the number of desired subhypergraphs.

\begin{fact}[{\cite{KNP2021}}]\label{ns}
For a graph $G$ with maximum degree $\Delta$, the number of connected subgraphs of $G$ containing a given vertex with $b$ edges is at most $(e\Delta)^b$.
\end{fact}

By the definition of $T$, for each connected subhypergraph $H'$ of $H$ containing $v$ with $b$ edges, there exists at least one subtree of $T$ starting from $v$ on $\{v\}\cup E(H')$ with $b$ edges.
Indeed, since $H'$ is connected, we can suppose $E(H')=\{e_{01},\dots,e_{0{i_0}},e_{11},\dots,e_{1{i_1}},e_{21},\dots,e_{2{i_2}},\dots,e_{j1},\dots,e_{j{i_j}}\}$ such that $v\in e_{0m}$ for $m\in[i_0]$, $i_0+i_1+\dots+i_j=b$, and for $s\ge 1$, each of $e_{s1},\dots,e_{s{i_s}}$ has nonempty intersection with some $e_{s-1p}$, $p\in[i_{s-1}]$ and %each of $e_{s1},\dots,e_{s{i_s}}$ 
is disjoint from all $e_{qp}$, $p\in[i_{q}], q<s-1$.
So there exists $T'\subseteq T$ on $\{v\}\cup E(H')$ rooted on $v$ such that each of $e_{s1},\dots,e_{s{i_s}}$ is a child of some $e_{s-1p}$, $p\in[i_{s-1}]$.
By Fact~\ref{ns}, the number of subtrees starting rooted at $v$ with $b$ edges is at most $e^br^{2b}\binom{k+r-2}{r-1}^{b}$.
So the number of connected subhypergraphs of $H$ containing $v$ with $b$ edges is at most $e^br^{2b}\binom{k+r-2}{r-1}^{b}$.
\end{proof}

Finally, we collect some standard estimates needed in our proof.
\begin{fact}\label{f1}
For all $n\in\mathbb{N}$, we have \[n !\ge \left(\frac{n}{e}\right)^n\] and for all positive integers $1 \leq k \leq n$, we have
\[\binom{n}{k}\leq\left(\frac{e n}{k}\right)^k.\]
\end{fact}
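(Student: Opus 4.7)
The plan is to handle the two bounds separately; the first is the substantive one, and the second follows from the first by a short manipulation.

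For the inequality $n! \ge (n/e)^n$, the cleanest route is to compare $n!$ with a single term of the Taylor series for $e^n$. Since $e^n = \sum_{j\ge 0} n^j/j!$ and every term in this sum is nonnegative, in particular $e^n \ge n^n/n!$. Rearranging gives $n! \ge n^n/e^n = (n/e)^n$, which is exactly what is claimed. As an alternative I would present induction on $n$: the base case $n=1$ reduces to $1 \ge 1/e$, and the inductive step $(n+1)! = (n+1)\cdot n! \ge (n+1)(n/e)^n$ is at least $((n+1)/e)^{n+1}$ precisely because $(1+1/n)^n \le e$, which is the standard definition/characterization of $e$.

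For the inequality $\binom{n}{k} \le (en/k)^k$, the idea is to bound the falling factorial in the numerator crudely and then apply the first inequality to the denominator. Writing $\binom{n}{k} = n(n-1)\cdots(n-k+1)/k!$ and bounding each of the $k$ factors in the numerator by $n$ gives $\binom{n}{k} \le n^k/k!$. Substituting $k! \ge (k/e)^k$ from the first part yields $\binom{n}{k} \le n^k \cdot (e/k)^k = (en/k)^k$, as required.

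There is essentially no obstacle here: both inequalities are textbook facts, and the only mild subtlety is making sure that the inductive step in the first bound (or, equivalently, the series-term argument) is stated cleanly. No special properties of hypergraphs or of the random model enter, which is why the authors choose to record this as a \emph{fact} rather than a numbered proposition, and I would therefore keep the writeup to a few lines.
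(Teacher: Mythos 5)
Your proof is correct. The paper actually records this as a \emph{Fact} with no proof at all, treating both inequalities as standard estimates; your argument (the series bound $e^n \ge n^n/n!$ for the first, and $\binom{n}{k}\le n^k/k!$ combined with $k!\ge(k/e)^k$ for the second) is exactly the standard derivation one would supply, and both the main argument and the inductive alternative check out.
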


\begin{fact}\label{f3}
For integers $1\le x\le n$, we have $\frac{(n-x)!}{n!} \le \left(\frac{e}{n}\right)^x$.
\end{fact}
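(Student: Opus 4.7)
The plan is to reduce the bound to the two standard estimates already recorded in Fact~\ref{f1}. Rewriting the ratio as
\[\frac{(n-x)!}{n!} = \frac{1}{n(n-1)\cdots(n-x+1)} = \frac{1}{\binom{n}{x}\,x!},\]
it suffices to prove $\binom{n}{x}\,x! \ge (n/e)^x$. The first part of Fact~\ref{f1} directly yields $x! \ge (x/e)^x$, so what remains is the companion lower bound $\binom{n}{x} \ge (n/x)^x$; combining the two gives $\binom{n}{x}\,x! \ge (n/x)^x (x/e)^x = (n/e)^x$, and taking reciprocals produces the claimed inequality.

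To establish $\binom{n}{x} \ge (n/x)^x$, I would use the product expansion $\binom{n}{x}=\prod_{i=0}^{x-1}\frac{n-i}{x-i}$ and check termwise that each factor $\frac{n-i}{x-i}$ is at least $\frac{n}{x}$; this cross-multiplies to $ni \ge xi$, which is immediate from $n \ge x$. This termwise comparison is the only inequality not already recorded in Fact~\ref{f1}, but it is entirely elementary. There is no real obstacle: once the two lower bounds are assembled, the proof collapses to a single line. An equivalent route is to write $\frac{(n-x)!}{n!}=\prod_{i=0}^{x-1}\frac{1}{n-i}$ and combine with $x!\ge (x/e)^x$ after multiplying and dividing by $x!$, which is the same calculation in a slightly different order.
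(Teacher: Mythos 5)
Your proof is correct and follows essentially the same route as the paper: both reduce to combining $x! \ge (x/e)^x$ from Fact~\ref{f1} with the termwise inequality $x(n-a) \ge n(x-a)$ for $0 \le a < x$. The only cosmetic difference is that you factor the falling factorial as $\binom{n}{x}\,x!$ and bound $\binom{n}{x}\ge (n/x)^x$ separately, whereas the paper bounds the product $\prod_{a=0}^{x-1}(n-a)$ directly, but the underlying calculation is identical.
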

\begin{proof}
We first claim that $n(n-1)\cdots (n-x+1)\ge\left(\frac{n}{e}\right)^x$.
Indeed,
\[\prod_{a=0}^{x-1}(n-a)\ge\prod_{a=0}^{x-1}\frac{n}{x}(x-a)=\left(\frac{n}{x}\right)^xx!\ge\left(\frac{n}{x}\right)^x\left(\frac{x}{e}\right)^x=\left(\frac{n}{e}\right)^x\]
where we used $x\le n$ in the first inequality and Fact~\ref{f1} in the last inequality.
So we have 
\[
\begin{aligned}
\frac{(n-x)!}{n!}=\frac{1}{n(n-1)\cdots (n-x+1)}\le \left(\frac{e}{n}\right)^x.
\end{aligned}
\]
\end{proof}

\section{Proof of the main theorem}
\label{s4}
%In order to prove Theorem~\ref{main}, we need the following definitions and crucial second moment estimate.
Given $\ell\ge r\ge 2$, an \emph{$r$-uniform tight path} on $\ell$ vertices is the $r$-graph whose vertices are $\{v_1,\ldots, v_{\ell}\}$ and its edges are $\{v_i,\ldots, v_{i+r-1}\}$ for all $i \in\{1,\ldots,\ell-r+1\}$.
For $k\geq 1$, we say that an $r$-graph is the \emph{$k$th power of an $r$-uniform tight path} (or an \emph{$(r,k)$-path}, for short) if its vertices can be ordered such that each consecutive $r+k-1$ vertices span a copy of $K^{(r)}_{r+k-1}$, and there are no other edges than the ones forced by this condition.
This extends the notion of (tight) paths in hypergraphs, which corresponds to the case $k=1$.

Another key component of our proof is the following (loose) second moment estimate.

\begin{lemma}\label{lem:expect}
Let $r\ge 3$ and $k\ge 1$ be integers.
Suppose $C\ge 4er^{2}\binom{k+r-2}{r-1}((2k+2r-3)e)^{\binom{k+r-2}{r-1}^{-1}}$ and $p= Cn^{-1/\binom{k+r-2}{r-1}}$. 
Then we have $\mathbb{E}[X^2] = O_{r,k}(\mathbb{E}[X]^2)$.
Moreover, if $C \to \infty$ when $n\to \infty$, then $\mathbb{E}[X^2]\le(1+o(1))\mathbb{E}[X]^2$.
\end{lemma}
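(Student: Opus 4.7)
The plan is a second-moment argument for $X$, the number of labeled copies of $HC_r^k$ in $H^{(r)}(n,p)$. Let $N$ denote the number of such copies on $[n]$ and $f:=\binom{k+r-2}{r-1}$; by Fact~\ref{max} each copy has $nf$ edges, hence $\mathbb{E}[X]=Np^{nf}$. Fixing one copy $H_1$ and using vertex-transitivity,
\[
\frac{\mathbb{E}[X^2]}{\mathbb{E}[X]^2}\;=\;\frac{1}{N}\sum_{H_2}p^{-b(H_1,H_2)}, \qquad b(H_1,H_2):=|E(H_1)\cap E(H_2)|.
\]
The $b(H_1,H_2)=0$ contribution is at most $1$, so the heart of the argument is to bound
\[
R\;:=\;\frac{1}{N}\sum_{\varnothing\neq P\subseteq H_1} M_P\,p^{-e(P)}
\]
by $O_{r,k}(1)$, and by $o(1)$ when $C\to\infty$, where $M_P:=|\{H_2:P\subseteq E(H_2)\}|$. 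The elementary inequality $(1+p^{-1})^b-1\ge p^{-b}$ for $b\ge 1$ gives $R\ge \mathbb{E}[p^{-b(H_1,H_2)}\mathbf{1}_{b\ge 1}]$, so bounding $R$ suffices.

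I would organize the sum by the triple $(b,v,s)$ of edges, vertices and components of $P$. Two counts drive the calculation. \emph{(a) Placement count.} Treating each of the $s$ components as a block that must embed into the cyclic order of $H_2$, and using that any connected subhypergraph of $HC^k_r$ is supported inside the $(2k+2r-3)$-vertex closed neighborhood of each of its vertices, one obtains
\[
\frac{M_P}{N}\;\le\;\Big(\frac{e}{n}\Big)^{v-s}\cdot O_{r,k}(1)^s
\]
via Fact~\ref{f3} and a careful arrangement of $s$ blocks amongst $n-v$ free vertices, with the per-block factor absorbing the reversal, orientation, and interior-gap freedom. \emph{(b) Enumeration of $P$'s.} For prescribed $(b,s)$, I would choose $s$ ``root'' vertices in $n^s/s!$ ways and, for each root, a connected subhypergraph of $H_1$ with the desired number of edges; Lemma~\ref{l1} caps the product across components by $(er^2 f)^b$, and summing over compositions of $b$ into $s$ parts costs only a further factor $\binom{b-1}{s-1}\le 2^b$.

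Multiplying these bounds, invoking Fact~\ref{f4} in the form $v-s\ge b/f+(r-2)s$, and substituting $p=Cn^{-1/f}$ (so that $p^{-b}(e/n)^{b/f}=(e/C^f)^{b/f}$) expresses $R$ as a double geometric-type sum
\[
R\;\le\;\sum_{b\ge 1}\sum_{1\le s\le b}\frac{1}{s!}\Big(\frac{er^2 f\cdot(e(2k+2r-3))^{1/f}}{C}\Big)^{b}\Big(\frac{O_{r,k}(1)}{n^{r-3}}\Big)^{s}.
\]
The per-component factor $(2k+2r-3)^s$ arising from (a) is folded into the $b$-base using $s\le b$ together with Fact~\ref{f4}. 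The hypothesis $C\ge 4er^2 f(e(2k+2r-3))^{1/f}$ then forces the $b$-geometric ratio to be at most $1/4$, yielding $R=O_{r,k}(1)$; when $C\to\infty$ the ratio tends to zero, hence $R=o(1)$, which proves the ``moreover'' clause.

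\textbf{Main obstacle.} The delicate step is the placement count $M_P$: a connected subhypergraph of $HC^k_r$ need not occupy a contiguous interval in the cyclic order, so realizing each component inside a second cyclic ordering requires careful bookkeeping of the internal gap pattern, with the $O(2k+2r-3)$ per-component freedom matching the ``interval'' structure of $HC^k_r$. This is also the feature that breaks at $r=2$: the saving $n^{-(r-3)s}$ then becomes $n^{s}$, so no choice of $C$ can dominate the contributions of $P$'s with many components.
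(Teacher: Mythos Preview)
Your overall plan matches the paper's almost exactly: reduce $\mathbb{E}[X^2]/\mathbb{E}[X]^2$ to a sum over intersection patterns, organize by the pair $(b,s)$ of edges and components, enumerate the subhypergraphs of a fixed $H_1$ via Lemma~\ref{l1} (your step~(b), the paper's Proposition~\ref{p1}), bound the number of cyclic orderings extending a given $P$ via a placement count (your step~(a), the paper's Proposition~\ref{p2}), and close with the vertex lower bound from Fact~\ref{f4}.

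The gap is in step~(a). Your claimed bound $M_P/N \le (e/n)^{v-s}\cdot O_{r,k}(1)^s$ and its justification are not correct. It is simply false that ``any connected subhypergraph of $HC^k_r$ is supported inside the $(2k+2r-3)$-vertex closed neighborhood of each of its vertices'': a long $(r,k)$-path is connected but has unbounded diameter. Correspondingly, the interior freedom for embedding a component with $p_i$ vertices into a second cyclic order is \emph{not} $O_{r,k}(1)$ per component but rather $(2k+2r-4)^{p_i-1}$, i.e.\ exponential in $p_i$. The paper gets this by a greedy vertex-insertion argument (Proposition~\ref{p2}): fix a root in each component, then insert the remaining $v-s$ vertices one at a time, each having at most $2(k+r-2)$ valid positions next to an already-placed neighbor; together with Fact~\ref{f3} this yields $M_P/N \le ((2k+2r-3)e/n)^{v-s}$.

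Your final displayed double sum is essentially what one obtains from this \emph{correct} placement bound after applying Fact~\ref{f4} in the form $v-s\ge b/f+(r-2)s$: the factor $(2k+2r-3)^{v-s}$ then contributes $(2k+2r-3)^{b/f}$ to the $b$-ratio and $(2k+2r-3)^{(r-2)s}=O_{r,k}(1)^s$ to the $s$-factor, with the $n^{-(r-3)s}$ saving exactly as you wrote (the paper only uses $(r-2)s\ge s$ and forgoes this extra $n$-power, which is harmless for $r\ge 3$). So your endgame and your diagnosis of the $r=2$ failure are right, but the route through~(a) as you stated it does not work; replace the per-component $O_{r,k}(1)$ by the per-vertex $(2k+2r-4)$ insertion bound and the argument goes through.
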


Now we are ready to prove Theorem~\ref{main} by applying Lemmas~\ref{lem:PZ},~\ref{lem:coarse}, and~\ref{lem:expect}, which follows the proof idea used in~\cite{NS2020}.

\begin{proof}[Proof of Theorem~\ref{main}]
Let $r\ge 3$ and $k\ge 1$ be given integers, and let $C>0$ be the constant provided in Lemma~\ref{lem:expect}.
Let us denote $p^*_{k,r}=p^*_{k,r}(n):=n^{-1/{\binom{k+r-2}{r-1}}}$, and assume $p\ge Cp^*_{k,r}$.
Our goal is to show that a.a.s.~the random $r$-graph $H^{(r)}(n, p)$ contains the $k$th power of a tight Hamilton cycle.
To accomplish this, we divide the proof into two parts.
Firstly, we show that the property of containing the $k$th power of a tight Hamilton cycle has a sharp threshold.
Subsequently, we show that this sharp threshold must be $p^*_{k,r}$.

Recall that $p\ge Cp^*_{k,r}$, where $p^*_{k,r}=n^{-1/{\binom{k+r-2}{r-1}}}$ and $C>0$ is the constant mentioned in Lemma~\ref{lem:expect}. 
By applying Lemmas~\ref{lem:PZ} and~\ref{lem:expect}, it can be deduced that the random $r$-graph $H^{(r)}(n, p)$ contains the $k$th power of a tight Hamilton cycle with a probability of at least $\delta$, where $\delta=\delta(r,k)>0$.
This observation implies that if the property of containing the $k$th power of a tight Hamilton cycle has a sharp threshold, then the sharp threshold is necessarily asymptotic to $p^*_{k,r}$.
So it remains to prove that the monotone $r$-graph property $W =(W_n)_{n\ge 0}$ of containing the $k$th power of a tight Hamilton cycle has a sharp threshold.

Now, let us proceed by assuming the contrary, namely that $W$ has a coarse threshold.
According to Lemma~\ref{lem:coarse}, there exist universal constants $\alpha,\beta>0$, a threshold function $\hat{p} =\hat{p}(n)$, and a fixed $r$-graph $F$ such that for infinitely many $n\in \mathbb{N}$, there exists an $n$-vertex $r$-graph $H_n\notin W_n$ such that the addition of a random copy of $F$ to $H_n$ significantly increases the likelihood of the resulting graph having the property $W_n$ compared to the addition of a random collection of edges with a density of approximately $\hat{p}$; more precisely, we have
\[
\mathbb{P}(H_n\cup H^{(r)}(n, \beta\hat{p})\in W_n)<1-2\alpha, \label{thm:equ1}
\]
where $H^{(r)}(n, \beta\hat{p})$ denotes the random $r$-graph with the same vertex set as $H_n$, and
\[
\mathbb{P}(H_n\cup \tilde{F} \in W_n)>1-\alpha, \label{thm:equ2}
\]
where $\tilde{F}$ denotes a random copy of $F$ on the same vertex set as $H_n$.

Note that the only way $F$ can help induce the $k$th power of a tight Hamilton cycle in $H_n$ is through some subhypergraph of itself that appears in all large enough $k$th power of tight Hamilton cycles. 
By the pigeonhole principle (and adding extra edges if necessary), we conclude from~(\ref{thm:equ2}) that there exists a fixed $(r,k)$-path $P$, say with $e_P$ edges on $v_P$ vertices, such that, for some universal constant $\gamma>0$, we have
\[\mathbb{P}(H_n\cup \tilde{P} \in W_n) > \gamma,\]
where $\tilde{P}$ denotes a random copy of $P$ on the same vertex set as $H_n$. 
In other words, a positive fraction of all the possible ways to embed $P$ into the vertex set of $H_n$ are \emph{useful} and end up completing the $k$th power of a tight Hamilton cycle.
Moreover, we can assume that $v_P\ge k+r-1$ because we can add extra vertices if necessary, then we have $e_P=\binom{k+r-1}{r}+\left(v_P-(k+r-1)\right)\binom{k+r-2}{r-1}=\left(v_P-\frac{(r-1)(k+r-1)}{r}\right)\binom{k+r-2}{r-1}$ by the definition of the $(r,k)$-path.

From Lemma~\ref{lem:coarse} we know that $\hat{p}$ is an asymptotic threshold for $W$, clearly $\hat{p}=\Theta(p_{k,r}^{*})=\Theta\left(n^{-1/{\binom{k+r-2}{r-1}}}\right)$, since $p_{k,r}^{*}$ is also an asymptotic threshold for $W$, as can be read off from the proof of Lemma~\ref{lem:expect}. 
Next, we consider the number of useful copies of $P$, denoted by $Y$, created by adding a $\beta \hat{p}= \Theta\left(n^{-1/{\binom{k+r-2}{r-1}}}\right)$ density of random edges to $H_n$. Then
\[\mathbb{E}[Y]\ge \gamma\binom{n}{v_P}\frac{v_P!}{aut(P)}(\beta\hat{p})^{e_P}=\Omega\left(n^{v_P}\hat{p}^{e_P}\right),\]
where $aut(P)$ is the number of automorphisms of $P$.
Now we wish to apply the results from~\cite[Propositions 2.1 and 2.3]{BHKM2019} and Janson's inequality (see e.g.~\cite[Theorem 2.14]{Janson} and also~\cite[inequalty (2)]{BHKM2019}) to prove that $\mathbb{P}(Y=0)\rightarrow 0$ as $n\rightarrow \infty$.
For this denote by $Y'$ the number of copies of $P$ in $H^{(r)}(n, \beta\hat{p})$, then the result~\cite[Proposition 2.1]{BHKM2019} gives that\footnote{Here we omit the definitions of $\Delta_Y$, $\Delta_{Y'}$, and $\Phi_P$, and we refer the reader to \cite{BHKM2019} for more details.} $\Delta_Y\le \Delta_{Y'}\le v_P!2^{2v_P}(\beta \hat{p})^{2e_P}/\Phi_P$, where the first inequality follows straightly by the definitions of $\Delta_Y$ and $\Delta_{Y'}$. 
Then by taking $t=\mathbb{E}[Y]$ in Janson's inequality (\cite[inequalty (2)]{BHKM2019}), and together with the result \cite[Proposition 2.3]{BHKM2019}, yields
\[\mathbb{P}(Y=0)\le \exp\left(-\frac{\mathbb{E}[Y]^2}{2\Delta_Y}\right) \le\exp(-\Theta(n))\rightarrow 0\] 
when $n\rightarrow \infty$, as required.
This implies that adding a $\beta\tilde{p}$ density of random edges to $H_n$ must a.a.s.~create at least one useful copy of $P$ in $H_n$ and complete the $k$th power of a tight Hamilton cycle, contradicting~(\ref{thm:equ1}).
Hence, we conclude that $W$ has a sharp threshold, which completes the proof.
\end{proof}

\section{Proof of Lemma~\ref{lem:expect}}
Let $Q_{n}$ be the symmetric group of permutations of $[n]$, where
a permutation $\sigma\in Q_{n}$ is an arrangement $\sigma(1), \sigma(2),\dots, \sigma(n)$ of the elements of $[n]$.
Note that $|Q_{n}|=n!/(2n)=(n-1)!/2$.
Let $m$ be the number of edges in the $k$th prower of a Hamilton tight cycle of order $n$.
Then $m=\binom{k+r-2}{r-1}n$.
Given $\sigma\in Q_{n}$, consider the $r$-graph $H_{\sigma}$ on $[n]$ with $m$ edges, where for $i\in[n]$, the $(k+ r-1)$-set $\{\sigma(i), \sigma(i + 1), \dots, \sigma(i +k+ r-2)\}$ induces a clique $K^{(r)}_{k+ r-1}$, of course with indices being considered cyclically modulo $k+r-1$. 
We write $H_{\sigma}$ for such natural $k$th prower of a Hamilton tight cycle in $K^{(r)}_{n}$ associated with $\sigma$.

For $0 \le b \le m$ and $\sigma\in Q_{n}$, let $N_{\sigma}(b)$ denote the number of
permutations $\tau\in Q_{n}$ such that $|E(H_{\sigma}\cap H_{\tau})|=b$.
In order to prove Lemma~\ref{lem:expect}, we need the following key lemma.
\begin{lemma}\label{k}
Let $r\ge 3$ and $k\ge 1$ be integers.
Suppose $C\ge 4er^{2}\binom{k+r-2}{r-1}((2k+2r-3)e)^{\binom{k+r-2}{r-1}^{-1}}$ and $p=Cn^{-1/\binom{k+r-2}{r-1}}$.
For any $\sigma\in Q_{n}$, we have 
\[\frac{\sum_{b=1}^{m}N_{\sigma}(b)p^{-b}}{|Q_n|}=O_{r,k}(1).\]
Moreover, if $C \to \infty$ when $n\to \infty$, then we have $\frac{\sum_{b=1}^{m}N_{\sigma}(b)p^{-b}}{|Q_n|} = o(1)$.
\end{lemma}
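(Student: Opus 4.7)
The plan is to stratify the sum $\sum_{b\ge 1}N_\sigma(b)p^{-b}$ by the combinatorial type of the intersection sub-hypergraph $P:=H_\sigma\cap H_\tau\subseteq H_\sigma$, parametrised by its number of edges $b$, its number of connected components $s$, and its vertex count $v:=|V(P)|$. One has $N_\sigma(b)\le \sum_{P}\bigl|\{\tau\in Q_n : E(H_\tau)\supseteq E(P)\}\bigr|$, where $P$ ranges over sub-hypergraphs of $H_\sigma$ with $b$ edges; I will bound (i) the number of such $P$ and (ii) the number of compatible $\tau$, then normalise by $|Q_n|=(n-1)!/2$ and multiply by $p^{-b}$.

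For (i), decompose $b=b_1+\cdots+b_s$ with each $b_i\ge 1$ (there are $\binom{b-1}{s-1}$ such compositions). Each component is specified by a root vertex in $[n]$ together with a connected sub-hypergraph of $H_\sigma$ through it with $b_i$ edges; by Lemma~\ref{l1} the latter count is at most $(er^{2}\binom{k+r-2}{r-1})^{b_i}$. Dividing by $s!$ for unordered components yields the bound
\[
\frac{1}{s!}\binom{b-1}{s-1}\,n^{s}\Bigl(er^{2}\tbinom{k+r-2}{r-1}\Bigr)^{b}.
\]
For (ii), each component of $P$, being a connected sub-hypergraph of an $(r,k)$-cycle, is forced to occupy a short contiguous arc of $\tau$'s cyclic order. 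A standard block-embedding argument yields at most $\kappa_{r,k}^{b}(n-v+s-1)!\,2^{s-1}$ valid $\tau$ for some constant $\kappa_{r,k}$ (with $\kappa_{r,k}=1$ when $k=1$); dividing by $|Q_n|$ and applying Fact~\ref{f3} gives
\[
\frac{|\{\tau:E(H_\tau)\supseteq E(P)\}|}{|Q_n|}\le \kappa_{r,k}^{b}\,2^{s}\Bigl(\tfrac{2e}{n}\Bigr)^{v-s}.
\]

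Combining (i) and (ii), invoking Fact~\ref{f4} as $v-s\ge b\binom{k+r-2}{r-1}^{-1}+(r-2)s$, and using $p^{-b}=C^{-b}n^{b\binom{k+r-2}{r-1}^{-1}}$, the $\pm b\binom{k+r-2}{r-1}^{-1}$ powers of $n$ cancel, and the remaining powers of $n$ collect into $n^{s-(r-2)s}=n^{-(r-3)s}\le 1$ for $r\ge 3$. What remains is the double sum
\[
\sum_{b\ge 1}\sum_{s=1}^{b}\frac{1}{s!}\binom{b-1}{s-1}A(C)^{b}B(r,k)^{s},
\]
with $A(C)=\Theta(1/C)$ and $B=B(r,k)$ depending only on $r,k$. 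Since $\sum_{s=1}^{b}\binom{b-1}{s-1}B^{s}\le B(1+B)^{b-1}$, the whole expression is geometric in $b$ and converges as long as $A(C)(1+B)<1$; the constant $C$ in the hypothesis is precisely chosen to guarantee this, giving the $O_{r,k}(1)$ bound, while letting $C\to\infty$ drives $A\to 0$ and hence the sum to $o(1)$.

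The principal obstacle, and the step that fails at $r=2$, is the cancellation of the $n^{s}$ factor coming from root choices in (i) against the $n^{-(r-2)s}$ contributed by Fact~\ref{f4} through Fact~\ref{f3}. The effective $n$-exponent attached to $s$ is $-(r-3)s$, harmless for $r\ge 3$ but equal to $+s$ when $r=2$; in the latter case the sum over $s$ grows with $n$ regardless of how $C$ is chosen, which is exactly the limitation acknowledged in the discussion following Theorem~\ref{main}.
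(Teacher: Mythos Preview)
Your overall architecture matches the paper's proof exactly: stratify by $(b,s)$, bound the number of sub-hypergraphs $P\subseteq H_\sigma$ via Lemma~\ref{l1} (your step (i) is precisely Proposition~\ref{p1}), bound the number of compatible $\tau$ (your step (ii), corresponding to Proposition~\ref{p2}), feed in Fact~\ref{f4}, and observe that the $n$-powers cancel thanks to the extra $(r-2)s$ term, which is exactly where $r\ge 3$ enters. Your closing discussion of why $r=2$ fails is accurate and matches the paper's remark.

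The gap is in step (ii). Your justification, ``each component of $P$ \ldots\ is forced to occupy a short contiguous arc of $\tau$'s cyclic order,'' is not correct for $k\ge 2$: an edge of $H_\tau$ is any $r$-subset of $k+r-1$ consecutive vertices, so a connected component of $P$ can (and typically will) have its vertices interspersed with vertices outside $V(P)$ in $\tau$'s cyclic order. Consequently, the ``standard block-embedding argument'' you invoke --- treating each component as a rigid block of length $v_i$ and permuting blocks with the remaining $n-v$ vertices --- does not apply directly, and the bound $\kappa_{r,k}^{b}(n-v+s-1)!\,2^{s-1}$ is left unjustified. (It does turn out to be a valid upper bound, since the paper's Proposition~\ref{p2} gives $(2k+2r-4)^{v-s}(n-v+s-1)!/2$ and one always has $v-s\le (r-1)b$; but this has to be \emph{proved}.) The paper's fix is to avoid any block interpretation: one fixes a root in each component, cyclically permutes the $s$ roots together with the $n-v$ non-$P$ vertices, and then inserts the remaining $v-s$ vertices of $P$ one at a time, each forced to land within $k+r-2$ positions of an already-placed neighbour on either side, yielding the factor $(2k+2r-4)^{v-s}$ with exponent $v-s$ rather than $b$. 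Once you replace your asserted bound by this argument, the rest of your computation goes through unchanged.
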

Now we prove Lemma~\ref{lem:expect} by assuming that Lemma~\ref{k} holds. 
\begin{proof}[Proof of Lemma~\ref{lem:expect}]
Let $H:=H^{(r)}(n, p)$.
We define the random variable $X$ that counts the number of $\sigma\in Q_{n}$ for which
the $k$th power of Hamilton tight cycle $H_{\sigma}$ is contained in $H$.
It is easy to get that $\mathbb{E}[X]=|Q_n|p^m=(n-1)!p^m/2$.
Now we estimate the second moment of $X$.
We see that 
\[
\begin{aligned} 
\mathbb{E}[X^2]=\sum_{\sigma,\tau \in Q_n} \mathbb{P}(H_{\sigma}\cup H_{\tau} \subseteq H) 
&=\sum_{\sigma\in Q_n} \left(\mathbb{P}(H_{\sigma} \subseteq H)\sum_{\tau \in Q_n } \mathbb{P}(H_{\tau} \subseteq H\,|\,H_{\sigma} \subseteq H)\right)
\\&=\sum_{\sigma\in Q_n} \left(p^{m}\sum_{b=0}^{m}N_{\sigma}(b)p^{m-b}\right)
\\&=\sum_{\sigma\in Q_n}N_{\sigma}(0)p^{2m}+\sum_{\sigma\in Q_n} \left(p^{2m}\sum_{b=1}^{m}N_{\sigma}(b)p^{-b}\right)
\\&\le| Q_n|^2p^{2m}+\sum_{\sigma\in Q_n} \left(p^{2m}\sum_{b=1}^{m}N_{\sigma}(b)p^{-b}\right).
\end{aligned}
\]
Using Lemma~\ref{k}, we have
\[
\begin{aligned} 
\frac{\mathbb{E}[X^2]}{\mathbb{E}[X]^2}&\le\dfrac{| Q_n|^2p^{2m}+\sum_{\sigma\in Q_n} \left(p^{2m}\sum_{b=1}^{m}N_{\sigma}(b)p^{-b}\right)}{| Q_n|^2p^{2m}}
\\&=1+\frac{1}{|Q_n|}\times\sum_{\sigma\in Q_n}\frac{\sum_{b=1}^{m}N_{\sigma}(b)p^{-b}}{|Q_n|}
=O_{r,k}(1),
\end{aligned}
\]
completing the proof of Lemma~\ref{lem:expect}.
Moreover, if $C \to \infty$ when $n\to \infty$, then $\mathbb{E}[X^2]\le(1+o(1))\mathbb{E}[X]^2$.
\end{proof}

\subsection{Proof of Lemma~\ref{k}}
For $\sigma\in Q_{n}$ and $1\le s\le b\le m$, we denote $N_{\sigma}(b,s)$ by the number of $\tau\in Q_{n} $such that %$ |E(H_{\sigma}\cap H_{\tau})|=b$ 
$H_{\sigma}\cap H_{\tau}$ has $b$ edges and $s$ components.
Note that $N_{\sigma}(b)=\sum_{s=1}^{b}N_{\sigma}(b,s)$.
In order to compute $N_{\sigma}(b,s)$, we need the following two propositions.
\begin{proposition}\label{p2}
For any $\sigma\in Q_{n}$ and $1\le s\le b$, fix a subhypergraph $P$ of $H_{\sigma}$ with $b$ edges and $s$ components.
Then the number of permutations $\tau\in Q_{n}$ with $P \subseteq H_{\tau}$ is at most $(n-|V(P)|+s-1)!(2k+2r-4 ))^{|V(P)|-s}/2$.
\end{proposition}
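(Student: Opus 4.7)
The plan is to exploit the structural constraint that $P\subseteq H_\tau$ imposes on $\tau$ and then count via a natural decomposition.

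My first step would be a contiguity claim: for any $\tau\in Q_n$ with $P\subseteq H_\tau$, the vertices of each component $P_i$ occupy a contiguous cyclic interval of length $v_i$ in $\tau$. The point is that any two vertices of $V(P_i)$ are joined by a chain of overlapping $P_i$-edges, each of which must appear as an edge of $H_\tau$ and hence lie within $k+r-1$ consecutive positions of $\tau$; chaining these windows forces the positions of $V(P_i)$ to form a single arc.

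Given contiguity, I would count $\tau$'s by (i) collapsing each $V(P_i)$ to a super-node and counting ordered cyclic arrangements of the $s+n-|V(P)|$ labelled items (the $s$ super-nodes together with the vertices outside $V(P)$), giving $(s+n-|V(P)|-1)!$; and (ii) for each super-node $P_i$, choosing a linear ordering of $V(P_i)$ which makes $P_i$ a subhypergraph of the induced $(r,k)$-path, contributing some number $g_i$. Dividing the product by $2$ to identify reflected cyclic orderings in $Q_n$ yields $(s+n-|V(P)|-1)!\prod_i g_i/2$, so it suffices to show $\prod_i g_i\le (2k+2r-4)^{|V(P)|-s}$, equivalently $g_i\le(2k+2r-4)^{v_i-1}$ for each $i$, since $|V(P)|-s=\sum_i(v_i-1)$.

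For this per-component bound I would use that the $2$-shadow of an $(r,k)$-path has maximum degree $2(k+r-2)$: in any valid ordering each vertex is adjacent only to the at most $2(k+r-2)$ vertices within $k+r-2$ positions of it. Since $P_i$ is connected, the idea is to encode each valid ordering $(u_1,\ldots,u_{v_i})$ by $v_i-1$ local choices, one per position $j\ge 2$, each with at most $2k+2r-4$ options, by recording which positional neighbour of the previously placed vertices the new vertex $u_j$ is.

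The main obstacle is executing this encoding so that the choice of the ``first vertex'' $u_1$ does not introduce an extra factor of $v_i$: a naive ``choose $u_1$ and then extend'' argument would only give $g_i\le v_i(2k+2r-4)^{v_i-1}$. The fix is to pin down $u_1$ canonically within each component, for instance by using the orientation of the super-node within the cyclic arrangement of step~(i) to specify the leftmost vertex of $V(P_i)$, so that only $v_i-1$ genuinely free choices remain; carrying out this bookkeeping carefully is the heart of the proof.
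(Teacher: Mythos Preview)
Your contiguity claim is false for $k\ge 2$, and this breaks the super-node decomposition. Take $r=3$, $k=2$, so that $H_\tau$ is the square of a tight Hamilton $3$-cycle: every triple contained in a window of $k+r-1=4$ consecutive $\tau$-positions is an edge. If $P_i$ consists of a single edge $\{a,b,c\}$, then $P_i\subseteq H_\tau$ merely forces $\{a,b,c\}$ to lie inside some window of four consecutive positions, say positions $1,2,4$ with an unrelated vertex at position~$3$. Thus $V(P_i)$ need not be an interval of length $v_i$ in $\tau$; vertices outside $V(P)$ can be interleaved with those of $P_i$. Your step~(i) then undercounts, because after collapsing $V(P_i)$ to a super-node you have no mechanism for placing such interleaved vertices, and step~(ii) is not even well-posed since the restriction of $\tau$ to $V(P_i)$ is not an $(r,k)$-path on $v_i$ consecutive vertices. (For $k=1$ your claim does hold, since every edge of a tight cycle occupies exactly $r$ consecutive positions, so your scheme would go through there.)

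The paper sidesteps this by never asserting contiguity. Instead it fixes one root $v_i$ per component, first lays down a cyclic order on the $n-|V(P)|+s$ objects consisting of the roots together with all vertices outside $V(P)$, and then \emph{inserts} the remaining $|V(P)|-s$ vertices one at a time into this growing cyclic arrangement. The insertion order on $V(P_i)\setminus\{v_i\}$ is chosen (using connectivity of $P_i$) so that each new vertex shares a $P_i$-edge with some already-placed vertex $u$; since that edge must eventually sit inside a $\tau$-window of length $k+r-1$, and later insertions can only increase distances, the new vertex must currently be inserted within $k+r-2$ slots of $u$ on either side, giving at most $2(k+r-2)$ choices. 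This yields exactly the factor $(2k+2r-4)^{|V(P)|-s}$ without any contiguity hypothesis, and the pinning of one root per component is what removes the spurious factor of $v_i$ you were worried about.
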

\begin{proof}
Suppose $P_1,\ldots,P_s$ are the components of $P$.
Fix a vertex $v_i\in P_i$ as the root vertex of $P_i$ for each $i\in[s]$.
We first specify a cyclic permutation of $\{v_1,\dots,v_{s}\}\cup(V(K_{n}^{r})\setminus V(P))$.
The number of ways to do this is at most $(n-|V(P)|+s)!/(2(n-|V(P)|+s))=(n-|V(P)|+s-1)!/2$.

Next we insert the vertices of $V(P)\setminus\{v_1,\dots,v_{s}\}$ to extend the cyclic permutation to a full cyclic ordering of $V(K_{n}^{(r)})$.
For $i\in[s]$, let $|V(P_i)|=p_i$, then $|V(P)|=\sum_{i=1}^{s}p_i$. 
Now we label vertices of $V(P_i)$ as $u_i^1,u_i^2,\ldots,u_i^{p_i}$ such that $u_i^1:=v_i$ and for $y\ge2$ there exist $x<y$ and $e\in E(P_{i})$ with $\{u_i^x,u_i^y\}\subseteq e$, that is, $u_i^y$ is a neighbor of $u_i^x$ in $P_i$.
Suppose we have inserted $u_i^1,u_i^2,\ldots,u_i^{y-1}$. 
Note that there exists $u_i^x$ for some $x\in[y-1]$ such that $u_i^y$ is a neighbor of $u_i^x$ in $P_{i}$. 
Then there are at most $2(k+r-2)$ places to insert $u_i^y$.
So the number of possibilities to insect $V(P_{i})\setminus\{v_i\}$ is at most $(2k+2r-4)^{p_i-1}$.
Summing over all $i\in[s]$, we get that the number of ways to extend the cyclic permutation to a full cyclic ordering of $V(K_{n}^{(r)})$ (which is our desired permutation of $Q_{n}$) is at most $\prod_{i=1}^{s}(2k+2r-4)^{p_i-1}=(2k+2r-4)^{|V(P)|-s}$.
Together with the number of ways of embedding a cyclic permutation of $\{v_1,\dots,v_{s}\}\cup(V(K_{n}^{r})\setminus V(P))$, we conclude that the number of $\tau\in Q_{n}$ with $P \subseteq H_{\tau}$ is at most $(n-|V(P)|+s-1)!(2k+2r-4)^{|V(P)|-s}/2$.
\end{proof}
\begin{proposition}\label{p1}
For any fixed $\sigma\in Q_{n}$ and $1\le s\le b$, the number of subhypergraphs of $H_{\sigma}$ with $b$ edges and $s$ components is at most $\binom{n}{s}\binom{b-1}{s-1}e^br^{2b}\binom{k+r-2}{r-1}^{b}$.
\end{proposition}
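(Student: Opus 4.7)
The plan is to parametrize each subhypergraph $P$ of $H_\sigma$ with $b$ edges and $s$ components by three (essentially independent) choices: a distinguished ``root'' vertex in each component, a composition of $b$ recording how the edges are split among the components, and a connected subhypergraph of $H_\sigma$ through each root with the prescribed number of edges.

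First I would select an unordered set of $s$ root vertices of $H_\sigma$, one for each component of $P$; since the components are pairwise vertex-disjoint the roots are automatically distinct, so there are at most $\binom{n}{s}$ choices. Ordering the chosen roots canonically (for example, by their position in the cyclic ordering underlying $H_\sigma$) induces an ordering on the $s$ components, after which I record a composition $(b_1,\dots,b_s)$ of $b$ into $s$ positive parts, where $b_i$ is the number of edges of the component rooted at the $i$-th vertex; the number of such compositions is exactly $\binom{b-1}{s-1}$.

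Given a root $v_i$ together with its prescribed edge count $b_i$, Lemma \ref{l1} applied to $H_\sigma$ bounds the number of connected subhypergraphs of $H_\sigma$ containing $v_i$ and having $b_i$ edges by $e^{b_i} r^{2b_i} \binom{k+r-2}{r-1}^{b_i}$. Taking the product over $i\in[s]$ and using $\sum_{i=1}^s b_i = b$ collapses this factor to $e^{b} r^{2b} \binom{k+r-2}{r-1}^{b}$, and multiplying by the two combinatorial factors above yields the claimed upper bound.

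The argument is a genuine upper bound rather than an equality because the connected subhypergraphs selected independently through each root are not forced to be pairwise vertex-disjoint, so some triples of choices do not correspond to a valid $P$ with exactly $s$ components; conversely, every legitimate $P$ is produced exactly once once the canonical ordering of roots is fixed, so nothing is missed. The only subtlety I foresee is the bookkeeping needed to pin down the canonical ordering so that each $P$ is hit at least once; this is immediate from the cyclic ordering defining $H_\sigma$, and no ingredient beyond Lemma \ref{l1} and the elementary composition count is required.
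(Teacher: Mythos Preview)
Your proposal is correct and follows essentially the same approach as the paper: choose a root in each component ($\binom{n}{s}$), distribute the $b$ edges among the $s$ components via a composition ($\binom{b-1}{s-1}$), and apply Lemma~\ref{l1} to each rooted component. Your added remark about a canonical ordering of the roots is a helpful clarification that the paper leaves implicit; note only that each $P$ is in fact counted many times (since any vertex of a component can serve as its root), not exactly once, but this only strengthens the upper bound.
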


\begin{proof}
To specify a subhypergraph $P$ of $H_{\sigma}$ with $b$ edges and $s$ components as in the proof of Proposition~\ref{p2}, we proceed as follows. 
We first choose root vertices $v_{1},\ldots,v_s$ for the components $P_1,\ldots,P_s$ of $P$, respectively. 
Note that the number of choices for $v_{1},\ldots,v_s$ is at most $\binom{n}{s} $.
We then specify the size of $P_i$, denoted by $b_i$, for each $i\in[s]$. 
So the number of possibilities is equivalent to the number of positive integer solutions of $\sum_{i=1}^{s}b_i=b$, which is at most $\binom{b-1}{s-1} $.
Finally, we specify a connected subhypergraph $P_i$ containing $v_i$ with $b_i$ edges for each $i\in[s]$. 
By Lemma~\ref{l1} with $H_{\sigma}$ in place of $H$, there are at most $e^{b_{i}}r^{2b_{i}}\binom{k+r-2}{r-1}^{b_{i}}$ possibilities for this.
Combining these estimates, we get that the number of subhypergraphs of $H_{\sigma}$ with $b$ edges and $s$ components is at most \[\binom{n}{s}\binom{b-1}{s-1}\prod_{i=1}^{s}e^{b_{i}}r^{2b_{i}}\binom{k+r-2}{r-1}^{b_{i}}= \binom{n}{s}\binom{b-1}{s-1}e^br^{2b}\binom{k+r-2}{r-1}^{b}.\]
\end{proof}
Now we are ready to prove Lemma~\ref{k}.
\begin{proof}[Proof of Lemma~\ref{k}]
Fix $\sigma\in Q_{n}$.
For any $1\le s\le b$, by Proposition~\ref{p1}, the number of subhypergraphs of $H_{\sigma}$ with $b$ edges and $s$ components is at most $\binom{n}{s}\binom{b-1}{s-1}e^br^{2b}\binom{k+r-2}{r-1}^{b} \le (en/s)^s (2er^2)^b \binom{k+r-2}{r-1}^{b}$.
Considering each subhypergraph $P$ with $b$ edges and $s$ components, by Proposition~\ref{p2}, the number of $\tau\in Q_{n}$ with $P \subseteq H_{\tau}$ is at most $(n-|V(P)|+s-1)!(2k+2r-4))^{|V(P)|-s}/2$.
Let $x:=|V(P)|-s$ and note that $r\ge 3$, then we have $x\ge b\binom{k+r-2}{r-1}^{-1}+(r-2) s \ge b\binom{k+r-2}{r-1}^{-1}+ s$ by Fact~\ref{f4}.
So by the definition of $N_{\sigma}(b,s)$ and writing $c:=2er^{2}\binom{k+r-2}{r-1}$, we obtain
\[
\begin{aligned}\label{e1}
 N_{\sigma}(b,s)\le c^b (en/s)^s (n-x-1)!(2k+2r-4 )^{x}/2.
 \end{aligned}\tag{2}
 \]

By Fact~\ref{f3} and the lower bound on $x$, we have 
\[\label{e2}\frac{(n-x-1)!}{(n-1)!}(2k+2r-4 )^{x}\le\left(\frac{e}{n-1}\right)^x(2k+2r-4 )^{x}\le\left(\frac{(2k+2r-3 )e}{n}\right)^{b\binom{k+r-2}{r-1}^{-1}+s}, 
\]
where we used that $\frac{2k+2r-4}{n-1}\le \frac{2k+2r-3}{n}$. 
Since $p= Cn^{-1/\binom{k+r-2}{r-1}}$, we get
\[
\begin{aligned} 
2^b \frac{2N_{\sigma}(b,s)p^{-b}}{(n-1)!}&\le\frac{2^b c^b n^{b\binom{k+r-2}{r-1}^{-1}+s} (n-x-1)!(2k+2r-4 )^{x}}{(s/e)^sC^b (n-1)!}\\&\le \frac{2^bc^b\left((2k+2r-3)e\right)^{b\binom{k+r-2}{r-1}^{-1}}}{C^b}\cdot\left(\frac{(2k+2r-3)e^2}{s}\right)^s,
\end{aligned}
\]
which is at most $C':=((2k+2r-3)e^2)^{(2k+2r-3)e^2}$ if $C\ge 4er^{2}\binom{k+r-2}{r-1}((2k+2r-3)e)^{\binom{k+r-2}{r-1}^{-1}}$.
Recall that $|Q_{n}|=(n-1)!/2$, we get 
\[
\frac{\sum_{b=1}^{m}N_{\sigma}(b)p^{-b}}{|Q_n|} = \frac{\sum_{b=1}^{m} \sum_{s=1}^b 2N_{\sigma}(b,s)p^{-b}}{(n-1)!} \le \sum_{b=1}^{m} \frac{b C'}{2^{b}} = O_{r,k}(1).
\]

Moreover, if $C \to \infty$ when $n\to \infty$, then we can obtain that
\[
2^b \frac{2N_{\sigma}(b,s)p^{-b}}{(n-1)!}\le \frac{C'2^bc^b\left((2k+2r-3)e\right)^{b\binom{k+r-2}{r-1}^{-1}}}{C^b}=o(1).
\]
Let $2^b \frac{2N_{\sigma}(b,s)p^{-b}}{(n-1)!}=1/\omega$, where $\omega=\omega(n)\to \infty$, then we have
\[
\frac{\sum_{b=1}^{m}N_{\sigma}(b)p^{-b}}{|Q_n|} = \frac{\sum_{b=1}^{m} \sum_{s=1}^b 2N_{\sigma}(b,s)p^{-b}}{(n-1)!} = \sum_{b=1}^{m} \frac{b}{\omega2^{b}} = o(1).
\]
The proof is completed.
\end{proof}

\begin{remark}
We remark that the only place that we need $r\ge 3$ is in the proof of Lemma~\ref{k}. 
\end{remark}

\bibliographystyle{siam}
\bibliography{PowersHC}

@article {BHKM2019,
    AUTHOR = {Bedenknecht, Wiebke and Han, Jie and Kohayakawa, Yoshiharu and Mota, Guilherme O.},
     TITLE = {Powers of tight {H}amilton cycles in randomly perturbed
              hypergraphs},
   JOURNAL = {Random Structures Algorithms},
  FJOURNAL = {Random Structures \& Algorithms},
    VOLUME = {55},
      YEAR = {2019},
    NUMBER = {4},
     PAGES = {795--807},
      ISSN = {1042-9832},
   MRCLASS = {05C45 (05C65 05C80)},
  MRNUMBER = {4025389},
       DOI = {10.1002/rsa.20885},
       URL = {https://doi.org/10.1002/rsa.20885},
}

@article{Dirac,
    AUTHOR = {Dirac, G. A.},
     TITLE = {Some theorems on abstract graphs},
   JOURNAL = {Proc. London Math. Soc. (3)},
  FJOURNAL = {Proceedings of the London Mathematical Society. Third Series},
    VOLUME = {2},
      YEAR = {1952},
     PAGES = {69--81},
      ISSN = {0024-6115},
   MRCLASS = {56.0X},
  MRNUMBER = {47308},
MRREVIEWER = {W. T. Tutte},
       DOI = {10.1112/plms/s3-2.1.69},
       URL = {https://doi.org/10.1112/plms/s3-2.1.69},
       URL = {http://www.ams.org/mathscinet-getitem?mr=MR47308}
}

@article {DF2013,
    AUTHOR = {Dudek, Andrzej and Frieze, Alan},
     TITLE = {Tight {H}amilton cycles in random uniform hypergraphs},
   JOURNAL = {Random Structures Algorithms},
  FJOURNAL = {Random Structures \& Algorithms},
    VOLUME = {42},
      YEAR = {2013},
    NUMBER = {3},
     PAGES = {374--385},
      ISSN = {1042-9832},
   MRCLASS = {05C80 (05C45 05C65)},
  MRNUMBER = {3039684},
MRREVIEWER = {Andrew Clark Treglown},
       DOI = {10.1002/rsa.20404},
       URL = {https://doi.org/10.1002/rsa.20404},
}

@article {FSST22,
    AUTHOR = {Fischer, Manuela and \v{S}kori\'{c}, Nemanja and Steger, Angelika and
              Truji\'{c}, Milo\v{s}},
     TITLE = {Triangle resilience of the square of a {H}amilton cycle in
              random graphs},
   JOURNAL = {J. Combin. Theory Ser. B},
  FJOURNAL = {Journal of Combinatorial Theory. Series B},
    VOLUME = {152},
      YEAR = {2022},
     PAGES = {171--220},
      ISSN = {0095-8956},
   MRCLASS = {05C80 (05C45)},
  MRNUMBER = {4324910},
MRREVIEWER = {Serge Lawrence (Lawrencenko)},
       DOI = {10.1016/j.jctb.2021.09.005},
       URL = {https://doi.org/10.1016/j.jctb.2021.09.005},
}

@article {Friedgut2005,
    AUTHOR = {Friedgut, Ehud},
     TITLE = {Hunting for sharp thresholds},
   JOURNAL = {Random Structures Algorithms},
  FJOURNAL = {Random Structures \& Algorithms},
    VOLUME = {26},
      YEAR = {2005},
    NUMBER = {1-2},
     PAGES = {37--51},
      ISSN = {1042-9832},
   MRCLASS = {05C80 (60C05)},
  MRNUMBER = {2116574},
MRREVIEWER = {Michael Krivelevich},
       DOI = {10.1002/rsa.20042},
       URL = {https://doi.org/10.1002/rsa.20042},
}

@article {Frieze10,
    AUTHOR = {Frieze, Alan},
     TITLE = {Loose {H}amilton cycles in random 3-uniform hypergraphs},
   JOURNAL = {Electron. J. Combin.},
  FJOURNAL = {Electronic Journal of Combinatorics},
    VOLUME = {17},
      YEAR = {2010},
    NUMBER = {1},
     PAGES = {Note 28, 4},
   MRCLASS = {05C80 (05C45 05C65)},
  MRNUMBER = {2651737},
MRREVIEWER = {Adam Pawe\l  Wojda},
       DOI = {10.37236/477},
       URL = {https://doi.org/10.37236/477},
}

@book {Janson,
    AUTHOR = {Janson, Svante and {\L}uczak, Tomasz and Ruci{\'n}ski, Andrzej},
     TITLE = {Random graphs},
 PUBLISHER = {Wiley-Interscience, New York},
      YEAR = 2000,
     PAGES = {xii+333},
      ISBN = {0-471-17541-2},
   MRCLASS = {05C80 (60C05 82B41)},
  MRNUMBER = {2001k:05180},
MRREVIEWER = {Mark R. Jerrum},
}

@article {KNP2021,
    AUTHOR = {Kahn, Jeff and Narayanan, Bhargav and Park, Jinyoung},
     TITLE = {The threshold for the square of a {H}amilton cycle},
   JOURNAL = {Proc. Amer. Math. Soc.},
  FJOURNAL = {Proceedings of the American Mathematical Society},
    VOLUME = {149},
      YEAR = {2021},
    NUMBER = {8},
     PAGES = {3201--3208},
      ISSN = {0002-9939},
   MRCLASS = {05C80 (05C45)},
  MRNUMBER = {4273128},
MRREVIEWER = {John Haslegrave},
       DOI = {10.1090/proc/15419},
       URL = {https://doi.org/10.1090/proc/15419},
}

@inproceedings {K1972,
    AUTHOR = {Karp, Richard M.},
     TITLE = {Reducibility among combinatorial problems},
 BOOKTITLE = {Complexity of computer computations ({P}roc. {S}ympos., {IBM}
              {T}homas {J}. {W}atson {R}es. {C}enter, {Y}orktown {H}eights,
              {N}.{Y}., 1972)},
     PAGES = {85--103},
      YEAR = {1972},
   MRCLASS = {68A20},
  MRNUMBER = {0378476},
MRREVIEWER = {John T. Gill},
}

@article {KS1983,
    AUTHOR = {Koml\'{o}s, J\'{a}nos and Szemer\'{e}di, Endre},
     TITLE = {Limit distribution for the existence of {H}amiltonian cycles
              in a random graph},
   JOURNAL = {Discrete Math.},
  FJOURNAL = {Discrete Mathematics},
    VOLUME = {43},
      YEAR = {1983},
    NUMBER = {1},
     PAGES = {55--63},
      ISSN = {0012-365X},
   MRCLASS = {05C80 (05C45)},
  MRNUMBER = {680304},
MRREVIEWER = {Charles M. Grinstead},
       DOI = {10.1016/0012-365X(83)90021-3},
       URL = {https://doi.org/10.1016/0012-365X(83)90021-3},
}

@article {Kors,
    AUTHOR = {Kor\v{s}unov, A. D.},
     TITLE = {Solution of a problem of {P}. {E}rd{\H{o}}s and {A}. {R}{\'{e}}nyi on
              {H}amiltonian cycles in nonoriented graphs},
   JOURNAL = {Diskretn. Anal. (Metody Diskret. Anal. v Teorii Upravljaju\v{s}\v{c}ih Sistem)},
  FJOURNAL = {Akademiya Nauk SSSR. Sibirskoe Otdelenie. Institut Matematiki.
              Diskretny\u{\i} Analiz. Sbornik Trudov},
      YEAR = {1977},
    NUMBER = {31},
     PAGES = {17--56, 90},
   MRCLASS = {05C35},
  MRNUMBER = {543833},
}

@article {KO2012,
    AUTHOR = {K\"{u}hn, Daniela and Osthus, Deryk},
     TITLE = {On {P}\'{o}sa's conjecture for random graphs},
   JOURNAL = {SIAM J. Discrete Math.},
  FJOURNAL = {SIAM Journal on Discrete Mathematics},
    VOLUME = {26},
      YEAR = {2012},
    NUMBER = {3},
     PAGES = {1440--1457},
      ISSN = {0895-4801},
   MRCLASS = {05C80 (05C45)},
  MRNUMBER = {3022146},
MRREVIEWER = {A. G. Thomason},
       DOI = {10.1137/120871729},
       URL = {https://doi.org/10.1137/120871729},
}

@article {NS2020,
    AUTHOR = {Narayanan, Bhargav and Schacht, Mathias},
     TITLE = {Sharp thresholds for nonlinear {H}amiltonian cycles in
              hypergraphs},
   JOURNAL = {Random Structures Algorithms},
  FJOURNAL = {Random Structures \& Algorithms},
    VOLUME = {57},
      YEAR = {2020},
    NUMBER = {1},
     PAGES = {244--255},
      ISSN = {1042-9832},
   MRCLASS = {05C45 (05C65)},
  MRNUMBER = {4120599},
MRREVIEWER = {Yufei Huang},
       DOI = {10.1002/rsa.20919},
       URL = {https://doi.org/10.1002/rsa.20919},
}

@article {NS2019,
    AUTHOR = {Nenadov, Rajko and \v{S}kori\'{c}, Nemanja},
     TITLE = {Powers of {H}amilton cycles in random graphs and tight
              {H}amilton cycles in random hypergraphs},
   JOURNAL = {Random Structures Algorithms},
  FJOURNAL = {Random Structures \& Algorithms},
    VOLUME = {54},
      YEAR = {2019},
    NUMBER = {1},
     PAGES = {187--208},
      ISSN = {1042-9832},
   MRCLASS = {05C80 (05C45 05C65 05C76)},
  MRNUMBER = {3884618},
MRREVIEWER = {Yucong Tang},
       DOI = {10.1002/rsa.20782},
       URL = {https://doi.org/10.1002/rsa.20782},
}

@article {PP2016,
    AUTHOR = {Parczyk, Olaf and Person, Yury},
     TITLE = {Spanning structures and universality in sparse hypergraphs},
   JOURNAL = {Random Structures Algorithms},
  FJOURNAL = {Random Structures \& Algorithms},
    VOLUME = {49},
      YEAR = {2016},
    NUMBER = {4},
     PAGES = {819--844},
      ISSN = {1042-9832},
   MRCLASS = {05C80 (05C60 05C75 60C05)},
  MRNUMBER = {3570989},
MRREVIEWER = {Wojciech Kordecki},
       DOI = {10.1002/rsa.20690},
       URL = {https://doi.org/10.1002/rsa.20690},
}

@article {Posa1,
    AUTHOR = {P\'{o}sa, L.},
     TITLE = {Hamiltonian circuits in random graphs},
   JOURNAL = {Discrete Math.},
  FJOURNAL = {Discrete Mathematics},
    VOLUME = {14},
      YEAR = {1976},
    NUMBER = {4},
     PAGES = {359--364},
      ISSN = {0012-365X},
   MRCLASS = {05C35},
  MRNUMBER = {389666},
MRREVIEWER = {F. Harary},
       DOI = {10.1016/0012-365X(76)90068-6},
       URL = {https://doi.org/10.1016/0012-365X(76)90068-6},
}

@article {R2000,
    AUTHOR = {Riordan, Oliver},
     TITLE = {Spanning subgraphs of random graphs},
   JOURNAL = {Combin. Probab. Comput.},
  FJOURNAL = {Combinatorics, Probability and Computing},
    VOLUME = {9},
      YEAR = {2000},
    NUMBER = {2},
     PAGES = {125--148},
      ISSN = {0963-5483},
   MRCLASS = {05C80},
  MRNUMBER = {1762785},
MRREVIEWER = {Lyuben R. Mutafchiev},
       DOI = {10.1017/S0963548399004150},
       URL = {https://doi.org/10.1017/S0963548399004150},
}

@incollection{RR2010,
    AUTHOR = {R\"{o}dl, Vojtech and Ruci\'{n}ski, Andrzej},
     TITLE = {Dirac-type questions for hypergraphs---a survey (or more
              problems for {E}ndre to solve)},
 BOOKTITLE = {An irregular mind},
    SERIES = {Bolyai Soc. Math. Stud.},
    VOLUME = {21},
     PAGES = {561--590},
 PUBLISHER = {J\'{a}nos Bolyai Math. Soc., Budapest},
      YEAR = {2010},
   MRCLASS = {05-02 (05C65 05C70)},
  MRNUMBER = {2815614},
       DOI = {10.1007/978-3-642-14444-8_16},
       URL = {https://doi.org/10.1007/978-3-642-14444-8_16},
       URL = {http://www.ams.org/mathscinet-getitem?mr=MR2815614}
}

@incollection {Zhao2016,
    AUTHOR = {Zhao, Yi},
     TITLE = {Recent advances on {D}irac-type problems for hypergraphs},
 BOOKTITLE = {Recent trends in combinatorics},
    SERIES = {IMA Vol. Math. Appl.},
    VOLUME = {159},
     PAGES = {145--165},
 PUBLISHER = {Springer, [Cham]},
      YEAR = {2016},
   MRCLASS = {05-02 (05C35 05C45 05C65 05C70)},
  MRNUMBER = {3526407},
       DOI = {10.1007/978-3-319-24298-9\_6},
       URL = {https://doi.org/10.1007/978-3-319-24298-9_6},
}

\end{document}